\title{Tur$\rm{\acute{a}}$n problem for $\mathcal{K}_4^-$-free signed graphs}
\date{}
\author{Fan Chen$^{1,\, 2}$, Xiying Yuan$^{1,\, 2}$\thanks{Corresponding author. Email address: xiyingyuan@shu.edu.cn (Xiying Yuan)\\ \indent chenfan@shu.edu.cn (Fan Chen)\\ \indent This work is supported by the National Nature Science Foundation of China (Nos.11871040,
12271337)}}
\affil{{\footnotesize\emph{1. Department of Mathematics, Shanghai University, Shanghai 200444, P.R. China}}\\
{\footnotesize\emph{2. Newtouch Center for Mathematics of Shanghai University, Shanghai 200444, P.R. China}}}
\begin{document}
\newtheorem{theorem}{Theorem}[section]
\newtheorem{assumption}[theorem]{Assumptio}
\newtheorem{corollary}[theorem]{Corollary}
\newtheorem{proposition}[theorem]{Proposition}
\newtheorem{lemma}[theorem]{Lemma}
\newtheorem{definition}[theorem]{Definition}
\newtheorem{remark}[theorem]{Remark}
\newtheorem{problem}[theorem]{Problem}
\newtheorem{claim}{Claim}
\newtheorem{conjecture}[theorem]{Conjecture}
\newtheorem{fact}{Fact}

\maketitle
\noindent\rule[0pt]{16.5cm}{0.09em}

\noindent{\bf Abstract}

\noindent Suppose that $\dot{G}$ is an unbalanced signed graph of order $n$ with $e(\dot{G})$ edges. Let $\rho(\dot{G})$ be the spectral radius of $\dot{G}$, and $\mathcal{K}_4^-$ be the set of the unbalanced $K_4$. In this paper, we prove that if $\dot{G}$ is a $\mathcal{K}_4^-$-free unbalanced signed graph of order $n$, then $e(\dot{G})\leqslant \frac{n(n-1)}{2}-(n-3)$ and $\rho(\dot{G})\leqslant n-2$. Moreover, the extremal graphs are completely characterized.

\noindent{\bf Keywords:} Signed graph, Adjacency matrix, Spectral radius, Tur$\rm{\acute{a}}$n problem.

\noindent\rule[0pt]{16.5cm}{0.05em}

\section{Introduction}
The graph $G$ is considered to be simple and undirected throughout this paper. The vertex set and the edge set of a graph $G$ will be denoted by $V(G)$ and $E(G)$. A signed graph $\dot{G}=(G,\sigma)$ consists a graph $G$, called the underlying graph, and a sign function $\sigma:E(G) \rightarrow \left\{-1,+1\right\}$. The signed graphs firstly appeared in the work of Harary \cite{H}. If all edges get signs $+1$ (resp. $-1$), then $\dot{G}$ is called all positive (resp. all negative) and denoted by $(G,+)$ (resp. $(G,-)$). The sign of a cycle $C$ of $\dot{G}$ is $\sigma(C)=\prod_{e\in E(C)}\sigma(e)$, whose sign is $+1$ (resp. $-1$) is called positive (resp. negative). A signed graph $\dot{G}$ is called balanced if all its cycles are positive; otherwise it is called unbalanced. For more details about the notion of signed graphs, we refer to \cite{Z}.

Let $U$ be a subset of the vertex set $V(\dot{G})$ and $\dot{G}_U$ be the signed graph obtained from $\dot{G}$ by reversing the sign of each edge between a vertex in $U$ and a vertex in $V(\dot{G})\setminus U$. We say the signed graph $\dot{G}_U$ is switching equivalent to $\dot{G}$, and write $\dot{G}\sim \dot{G}_U$. The switching operation remains the signs of cycles. So if $\dot{G}$ is unbalanced, and $\dot{G}_U$ is also unbalanced.

For an $n\times n$ real symmetric matrix $M$, all its eigenvalues will be denoted by $\lambda_1(M)\geqslant\lambda_2(M)\geqslant\cdots\geqslant\lambda_n(M)$, and we write Spec($M$)=$\{\lambda_1(M),\lambda_2(M),\cdots,\lambda_n(M)\}$ for the spectra of $M$. The adjacency matrix of a signed graph $\dot{G}$ of order $n$ is an $n\times n$ matrix $A(\dot{G})=(a_{ij})$. If $\sigma(uv)=+1$ (resp. $\sigma(uv)=-1$), then $a_{uv}=1$ (resp. $a_{uv}=-1$) and if $u$ is not adjacent to $v$, then $a_{uv}=0$. The eigenvalues of $A(\dot{G})$ are called the eigenvalues of $\dot{G}$, denoted by $\lambda_1(\dot{G})\geqslant\lambda_2(\dot{G})\geqslant\cdots\geqslant\lambda_n(\dot{G})$. In particular, the largest eigenvalue $\lambda_1(\dot{G})$
is called the index of $\dot{G}$. The spectral radius of $\dot{G}$ is defined by $\rho(\dot{G})=\max\big\{|\lambda_i(\dot{G})|:1\leqslant i\leqslant n\big\}.$
Since, in general, $A(\dot{G})$ is not similar to a non-negative matrix, it may happen that $-\lambda_n(\dot{G})>\lambda_1(\dot{G})$. Thus,
$\rho(\dot{G})=\max\big\{\lambda_1(\dot{G}),-\lambda_n(\dot{G})\big\}.$ For the diagonal matrix $S_U=\text{diag}(s_1,s_2,\cdots,s_n)$, we have $A(\dot{G})=S_U^{-1}A(\dot{G}_U)S_U$ where $s_i=1$ if $i\in U$, and $s_i=-1$ otherwise. Therefore, the signed graphs $\dot{G}$ and $\dot{G}_U$ share the same spectra.

A graph may be regarded as a signed graph with all positive edges. Hence, the properties of graphs can be considered in terms of signed graphs naturally. Moreover, there are some special properties in terms of signed graphs. Such as Huang \cite{H1} solved the Sensitivity Conjecture by the spectral properties of signed hypercubes. For the spectral theory of signed graph, see \cite{BCKW ,KP, KS, ABH} for details, where \cite{BCKW} is an excellent survey about some open problems in the spectral theory of signed graphs.

Let $\mathcal{F}$ be a family of graphs. Graph $G$ is $\mathcal{F}$-free if $G$ does not contain any graph in $\mathcal{F}$ as a subgraph. The classical Tur$\rm{\acute{a}}$n type problem determines the maximum number of edges of an $n$ vertex $\mathcal{F}$-free graph, called the Tur$\rm{\acute{a}}$n number.
Let $T_r(n)$ be a complete $k$-partite graph of order $n$ whose partition sets
have sizes as equal as possible. Tur$\rm{\acute{a}}$n \cite{T} proved that $T_r(n)$ is the unique extremal graph of $K_{r+1}$-free graph, which is regarded as the beginning of the extremal graph theory. We refer the reader to \cite{BS, FG, YZ} for more results about Tur$\rm{\acute{a}}$n number.
\begin{theorem}\cite{T}
If $G$ is a $K_{r+1}$-free graph of order $n$, then
$$e(G)\leqslant e(T_r(n)),$$
with equality holding if and only if $G=T_r(n)$.
\end{theorem}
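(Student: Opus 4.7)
The plan is to prove Tur\'an's theorem via \emph{Zykov symmetrization}, which reduces any extremal $K_{r+1}$-free graph to a complete multipartite graph, after which a short balancing argument identifies the Tur\'an graph $T_r(n)$ as the unique maximizer.

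Let $G$ be a $K_{r+1}$-free graph on $n$ vertices with $e(G)$ maximum. The central step is to show that non-adjacency is an equivalence relation on $V(G)$, so that $G$ is complete multipartite. Given any two non-adjacent vertices $u,v$ with $\deg(u)\geq \deg(v)$, I perform the cloning operation: delete every edge incident to $v$ and add the edge $vw$ for each $w\in N(u)$, leaving $uv$ a non-edge. The resulting graph $G'$ is still $K_{r+1}$-free, because any clique through $v$ in $G'$ transfers to a clique through $u$ in $G$, and $e(G')-e(G)=\deg(u)-\deg(v)\geq 0$. To force $G$ itself to be complete multipartite, I would work with the extremal graph that additionally maximizes a secondary parameter (such as the number of ordered pairs of vertices with identical neighborhoods); any pair of non-adjacent vertices that are not twins will then yield, after a suitable sequence of cloning operations, a contradiction with this maximality.

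Once $G$ is known to be complete multipartite with parts of sizes $n_1,\dots,n_k$, the $K_{r+1}$-freeness forces $k\leq r$ (a complete $k$-partite graph contains $K_k$). A balancing argument then finishes the proof: if $k<r$, splitting any part of size $\geq 2$ into two nonempty subparts strictly increases the edge count while preserving $K_{r+1}$-freeness, so $k=r$; and if $n_i\geq n_j+2$ for some $i,j$, moving one vertex from part $i$ to part $j$ changes the edge count by $(n_i-1)-n_j\geq 1$, again contradicting extremality. Hence $|n_i-n_j|\leq 1$ for all $i,j$, which characterizes $T_r(n)$ uniquely and yields both $e(G)\leq e(T_r(n))$ and the equality case.

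The main obstacle I expect is the termination and strict-improvement issue in the symmetrization step: the cloning operation only guarantees $e(G')\geq e(G)$, so without care the procedure may cycle among extremal graphs that are not complete multipartite. I would address this by choosing $G$ to be lexicographically extremal (first maximize $e(G)$, then maximize a secondary structural parameter such as the number of twin pairs), so that any failure of the complete multipartite structure produces a strict gain in the secondary parameter and hence a contradiction. The remaining counting and exchange arguments are then elementary.
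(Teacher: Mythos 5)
The paper offers no proof of this statement: it is Tur\'an's classical theorem, quoted with a citation to Tur\'an's 1941 paper. So the only question is whether your argument stands on its own.

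Your route --- Zykov symmetrization to reduce an edge-maximal $K_{r+1}$-free graph to a complete multipartite graph, followed by a balancing exchange --- is a standard and perfectly legitimate proof of Tur\'an's theorem, and your balancing step (splitting a part if $k<r$, moving a vertex if $n_i\geqslant n_j+2$) is correct and does deliver both the bound and the uniqueness of $T_r(n)$. The gap is in the step you yourself flag: you never actually prove that non-adjacency is an equivalence relation in the extremal graph. The cloning operation with $\deg(u)\geqslant\deg(v)$ only gives $e(G')\geqslant e(G)$, and your proposed repair --- lexicographic extremality with ``number of twin pairs'' as a secondary parameter --- is not verified and is not obviously monotone: cloning $u$ onto $v$ creates the twin pair $\{u,v\}$ but destroys every twin pair that $v$ previously belonged to, so the secondary parameter can stay the same or decrease. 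As written, the argument that the extremal graph is complete multipartite is an assertion, not a proof.

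The standard way to close this gap needs no secondary parameter. Suppose non-adjacency is not transitive; then there are vertices $u,v,w$ with $uv\notin E$, $vw\notin E$, $uw\in E$. If $\deg(v)<\deg(u)$ or $\deg(v)<\deg(w)$, clone the higher-degree endpoint onto $v$: the graph stays $K_{r+1}$-free and gains $\deg(u)-\deg(v)>0$ (resp.\ $\deg(w)-\deg(v)>0$) edges, contradicting edge-maximality. Otherwise $\deg(v)\geqslant\deg(u)$ and $\deg(v)\geqslant\deg(w)$; clone $v$ onto \emph{both} $u$ and $w$ simultaneously. The new graph is still $K_{r+1}$-free (any clique meets at most one of the three pairwise non-adjacent clones of $v$), and the edge count changes by $2\deg(v)-\deg(u)-\deg(w)+1\geqslant 1$, where the $+1$ accounts for the edge $uw$ being deleted once but subtracted twice. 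Either case strictly increases $e(G)$, so in the extremal graph non-adjacency is transitive and $G$ is complete multipartite. With that two-case argument inserted, your proof is complete; without it, the central structural claim is unsupported. (A further minor point: your ``split a part'' step when $k<r$ presumes some part has size at least $2$, which fails only in the degenerate case $n\leqslant r$, where $G=K_n=T_r(n)$ anyway.)
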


In 2007, Nikiforov \cite{N1} gave a spectral version of the Turán Theorem for the complete graph $K_{r+1}$. In
the past few decades, much attention has been paid to the search for the spectral Turán Theorem such as \cite{CDT, DKL,WKX}.
\begin{theorem}\cite{N1}
If $G$ is a $K_{r+1}$-free graph of order $n$, then
$$\rho(G)\leqslant\rho(T_r(n)),$$
with equality holding if and only if $G=T_r(n)$.
\end{theorem}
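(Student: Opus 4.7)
The plan is to identify $T_r(n)$ as the unique extremal graph in two stages: first, reduce any $K_{r+1}$-free graph of maximum spectral radius to a complete multipartite one via a Kelmans-type symmetrization driven by the Perron eigenvector; second, show that, among complete multipartite graphs on $n$ vertices with at most $r$ parts, the balanced Tur\'an graph $T_r(n)$ strictly maximizes the spectral radius.

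For the reduction stage, let $G$ be a $K_{r+1}$-free graph on $n$ vertices with maximum spectral radius $\rho := \rho(G)$, and let $\mathbf{x}$ be a unit nonnegative Perron eigenvector of $A(G)$. Pick any pair of non-adjacent vertices $u, v$ with $x_u \leq x_v$, and replace $N_G(u)$ by $N_G(v)$ (the Kelmans symmetrization) to obtain $G'$. Because $A(G)\mathbf{x} = \rho \mathbf{x}$, a direct computation yields
\[
\mathbf{x}^\top A(G')\mathbf{x} \;-\; \mathbf{x}^\top A(G)\mathbf{x} \;=\; 2\rho\, x_u(x_v - x_u) \;\geq\; 0,
\]
so the Rayleigh bound gives $\rho(G') \geq \rho$. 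Moreover, $G'$ is $K_{r+1}$-free: any new copy of $K_{r+1}$ must contain $u$, but since $u$ and $v$ are non-adjacent in $G'$, substituting $v$ for $u$ in such a clique would produce a $K_{r+1}$ in $G$, contradicting the hypothesis. Iterating this operation until non-adjacency becomes a transitive relation, I arrive at a complete $k$-partite extremal graph with $k \leq r$.

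For the balancing stage, on a complete $k$-partite graph with part sizes $n_1, \ldots, n_k$, the partition into parts is equitable, so $\rho$ equals the Perron root of the $k \times k$ quotient matrix $B$ given by $B_{ii}=0$ and $B_{ij}=n_j$ for $i \neq j$. Solving the eigenvalue equation produces the implicit identity $\sum_{i=1}^k n_i / (\rho + n_i) = 1$, from which a perturbation argument shows that (i) moving a vertex from a larger part to a smaller one strictly increases $\rho$, and (ii) splitting a part into two (when $k < r$) strictly increases $\rho$. Hence the unique maximizer is the balanced graph $T_r(n)$, and $\rho(G) \leq \rho(T_r(n))$ with equality if and only if $G = T_r(n)$.

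The delicate point I expect is the first stage: one must carefully verify that Kelmans symmetrization simultaneously weakly increases $\rho$ and preserves $K_{r+1}$-freeness, combined with a clean handling of the equality case so that any non-Tur\'an extremal graph is ruled out. The uniqueness conclusion ultimately relies on showing that every non-trivial symmetrization or balancing step strictly increases the spectral radius, which in turn comes down to the strict positivity of the Perron eigenvector components and the simplicity of the Perron root of a connected underlying graph.
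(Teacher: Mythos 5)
This statement is quoted from Nikiforov's paper \cite{N1}; the present paper gives no proof of it, so there is nothing in the source to compare against line by line. Your outline is the standard ``spectral Zykov/Kelmans symmetrization plus quotient-matrix balancing'' route, which is a legitimate and well-known way to prove the theorem (and is different in flavour from Nikiforov's original argument, which goes through inequalities relating the spectral radius to clique counts). Your Rayleigh-quotient computation $\mathbf{x}^{\top}A(G')\mathbf{x}-\mathbf{x}^{\top}A(G)\mathbf{x}=2\rho x_u(x_v-x_u)$ is correct, the preservation of $K_{r+1}$-freeness is argued correctly, and the balancing stage via $\sum_i n_i/(\rho+n_i)=1$ together with concavity of $t\mapsto t/(\rho+t)$ does give that splitting a part and equalizing part sizes both strictly increase $\rho$.

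The genuine gap is exactly where you suspect it: the first stage, as written, does not go through. First, you invoke positivity of the Perron eigenvector, which requires knowing the extremal graph is connected; that must be established separately (e.g.\ by adding an edge between components and checking $K_{r+1}$-freeness is preserved). Second, and more seriously, ``iterating this operation until non-adjacency becomes a transitive relation'' is not a valid termination argument: when $x_u=x_v$ the symmetrization yields no strict increase in the Rayleigh quotient, the resulting graph is merely another extremal graph, and repeated applications can cycle without ever producing a complete multipartite graph. The standard repair is a secondary extremal principle --- among all $K_{r+1}$-free graphs attaining the maximum spectral radius, choose one maximizing the number of edges (or argue directly that if $u\not\sim v$, $v\not\sim w$, $u\sim w$, then some symmetrization strictly increases either $\rho$ or the edge count while preserving $\rho$) --- and this same strictness analysis is what ultimately delivers the uniqueness of $T_r(n)$ in the equality case. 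Without that piece the proposal establishes the inequality only for complete multipartite graphs and leaves both the reduction and the characterization of equality incomplete.
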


How about the Tru$\rm{\acute{a}}$n problem of signed graph? The discussions about the complete signed graph kicked off. Let $\mathcal{K}_3^-$ be the set of the unbalanced $K_3$. Up to switching equivalence, we have $\mathcal{K}_3^-=\{\dot{H}\}$, where $\dot{H}$ is the signed triangle with exactly one negative edge. Wang, Hou, and Li \cite{WHL} determined the Tur$\rm{\acute{a}}$n number of $\mathcal{K}_3^-$ and the spectral Tur$\rm{\acute{a}}$n number of $\mathcal{K}_3^-$. The dashed lines indicate negative edges, and ellipses indicate the cliques with all positive edges in Fig. 1 and 2.

\begin{figure}
  \centering
  \begin{minipage}{0.32\linewidth}\label{F1}
  \centering
  \includegraphics[width=0.55\textwidth]{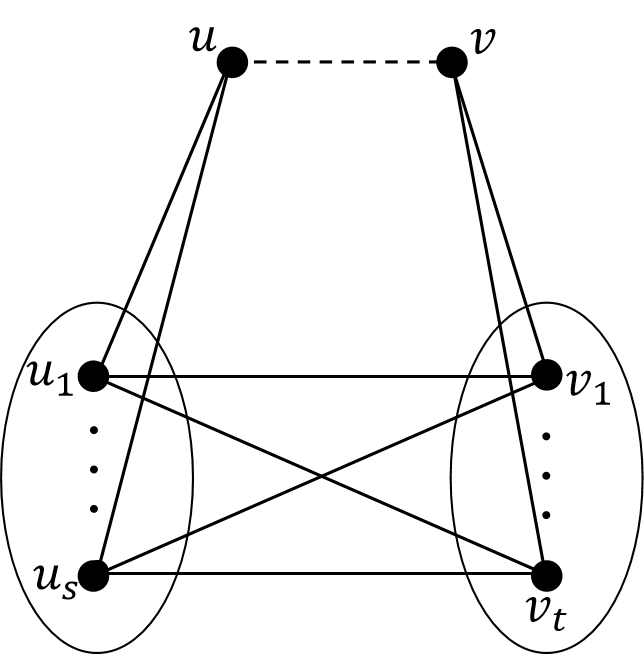}
  \caption*{\footnotesize{$\dot{G}(s,t)$}}
  \end{minipage}
  \begin{minipage}{0.32\linewidth}\label{F1}
  \centering
  \includegraphics[width=0.55\textwidth]{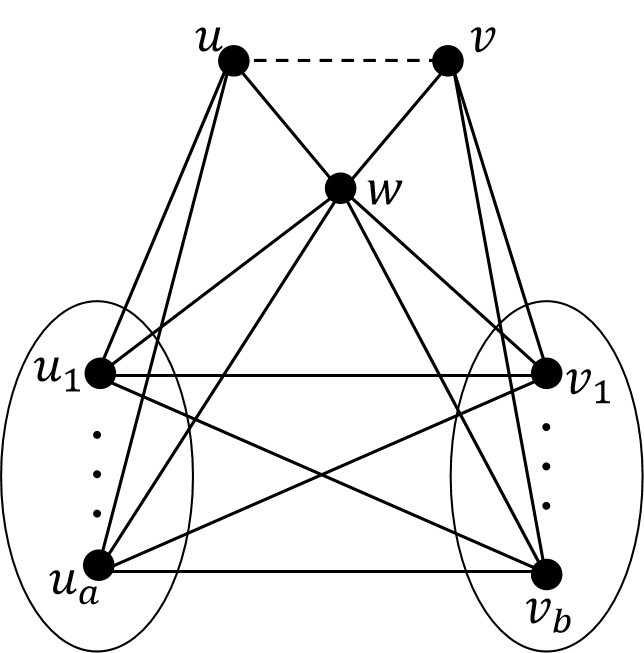}
  \caption*{\footnotesize{$\dot{G_1}(a,b)$}}
  \end{minipage}
 \begin{minipage}{0.32\linewidth}\label{F2}
  \centering
  \includegraphics[width=0.55\textwidth]{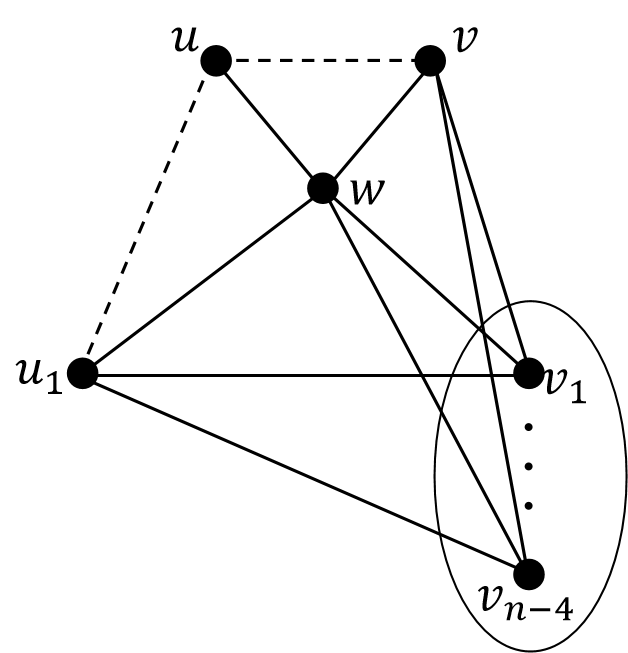}
  \caption*{\footnotesize{$\dot{G_1'}(1,n-4)$}}
  \end{minipage}
  \caption{\centering{The signed graphs $\dot{G}(s,t)$, $\dot{G_1}(a,b)$, and $\dot{G_1'}(1,n-4)$}}
\end{figure}

\begin{theorem}\label{c3edge}\cite{WHL}
Let $\dot{G}=(G,\sigma)$ be a connected $\mathcal{K}_3^-$-free unbalanced signed graph of order $n$. Then
$$e(\dot{G})\leqslant \frac{n(n-1)}{2}-(n-2),$$
with equality holding if and only if $\dot{G}\sim \dot{G}(s,t)$, where $s+t=n-2$ and $s$, $t\geqslant1$ (see Fig. 1).
\end{theorem}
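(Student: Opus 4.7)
My plan is to take the contrapositive: if $\dot{G}$ has at most $n-3$ non-edges in its underlying graph $G$, I will show that $\dot{G}$ must be balanced, contradicting the hypothesis. The key is a linear-algebraic observation: under this density the cycle space of $G$ over $\mathbb{F}_2$ is generated by its triangles, so that the $\mathcal{K}_3^-$-free condition (every triangle positive) propagates to positivity of every cycle, i.e.\ balance.

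To prove the cycle space is triangle-generated, let $C$ be a shortest cycle of $G$ not in the triangle subspace; necessarily $|C| \geqslant 4$. By the minimality of $|C|$, any chord of $C$ would split $C$ into two strictly shorter cycles whose $\mathbb{F}_2$-sum is $C$, and both would lie in the triangle subspace, forcing $C$ to as well. Hence $C$ is chordless, yielding $\binom{|C|}{2}-|C| = \tfrac{|C|(|C|-3)}{2}$ non-edges among its vertices. Moreover, any vertex $w \notin C$ adjacent to every vertex of $C$ would make $C = \sum_i w v_i v_{i+1}$ a sum of triangles, so each such $w$ is non-adjacent to at least one vertex of $C$, contributing $n-|C|$ further non-edges. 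Summing, $G$ has at least $\tfrac{|C|(|C|-3)}{2}+(n-|C|)$ non-edges. The hypothesis of at most $n-3$ non-edges then gives $(|C|-2)(|C|-3) \leqslant 0$, contradicting $|C| \geqslant 4$. Thus every cycle is a sum of triangles, so $\sigma(C) = \prod_T \sigma(T) = +1$ for every cycle $C$, i.e.\ $\dot{G}$ is balanced. This yields $e(\dot{G}) \leqslant \binom{n}{2}-(n-2)$.

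For the equality case, the analogous inequality with $n-2$ non-edges becomes $(|C|-1)(|C|-4) \leqslant 0$, forcing $|C| = 4$. So a shortest chordless negative cycle $C = v_1 v_2 v_3 v_4$ has length $4$, and each vertex outside $C$ is non-adjacent to exactly one vertex of $C$. After switching $C$ to carry a single negative edge $uv = v_1 v_2$, I propagate the $\mathcal{K}_3^-$-free constraint through the positive triangles involving $v_3$ and $v_4$ to partition $V \setminus \{u,v\}$ into two sides $A \ni v_4$ and $B \ni v_3$, each inducing a positive clique and joined completely by positive edges---matching $\dot{G}(s,t)$ with $s=|A|$, $t=|B|$, $s+t=n-2$, and $s,t\geqslant 1$ by connectedness (if, say, $s=0$ then $u$ would be a pendant and $\dot{G}$ would be balanced). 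I expect the equality step to be the main obstacle: switching away all stray negative edges outside $C$ and verifying the clique structure require a delicate propagation argument using the many positive-triangle constraints imposed by the density of $G$.
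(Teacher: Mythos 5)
This theorem is quoted from reference [WHL]; the present paper gives no proof of it, so your argument can only be judged on its own terms. The inequality half of your proposal is correct and clean: the sign map is a homomorphism from the $\mathbb{F}_2$-cycle space to $\{\pm1\}$, so once the cycle space is triangle-generated, ``every triangle positive'' forces balance, and your non-edge count $\tfrac{\ell(\ell-3)}{2}+(n-\ell)$ for a shortest chordless cycle outside the triangle subspace correctly yields $e(\dot{G})\leqslant\tfrac{n(n-1)}{2}-(n-2)$. (The same count run on a shortest \emph{negative} cycle --- which is automatically chordless and cannot have a vertex joined to all of it --- is the standard route, so your linear-algebraic framing is a repackaging rather than a new idea, but it is sound.)

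The equality case, which you explicitly leave as a sketch, has a genuine gap beyond mere bookkeeping. Your count does force a chordless negative $4$-cycle $C=v_1v_2v_3v_4$, with the two diagonals as the only non-edges inside $C$ and exactly one non-neighbour on $C$ for each outside vertex. But you then posit a partition of $V\setminus\{u,v\}$, with $u=v_1$ and $v=v_2$, into a side missing $u$ and a side missing $v$. Nothing forces an outside vertex to miss $v_1$ or $v_2$: it may miss $v_3$ or $v_4$ instead, and this genuinely occurs. For $n=5$, take non-edges $v_1v_3$, $v_2v_4$, $wv_3$ and signs $\sigma(v_1v_2)=\sigma(wv_2)=-1$ with all other edges positive; this is $\mathcal{K}_3^-$-free, unbalanced, attains the bound, and its only outside vertex misses $v_3$ --- the graph is still $\dot{G}(1,2)$, but its special pair is $\{v_2,v_3\}$, not $\{v_1,v_2\}$. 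So the pair $(u,v)$ can only be chosen after proving the lemma your sketch omits: no two outside vertices can miss \emph{opposite} vertices of $C$. (If $w$ misses $v_1$ and $w'$ misses $v_3$, propagating positivity through the triangles $wv_2v_3$, $wv_3v_4$, $w'v_1v_2$, $w'v_1v_4$, $ww'v_2$, $ww'v_4$ switches every edge on $\{v_1,\dots,v_4,w,w'\}$ positive and makes $C$ positive, a contradiction.) Only with that lemma do the missed cycle vertices all lie in one adjacent pair $\{v_i,v_{i+1}\}$, which then serves as $\{u,v\}$ and makes the complement of $G$ the required union of two disjoint stars; without it, the ``delicate propagation'' you defer cannot start.
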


\begin{theorem}\label{c3spectrum}\cite{WHL}
Let $\dot{G}=(G,\sigma)$ be a connected $\mathcal{K}_3^-$-free unbalanced signed graph of order $n$. Then
$$\rho(\dot{G})\leqslant\frac{1}{2}(\sqrt{n^2-8}+n-4),$$
with equality holding if and only if $\dot{G}\sim\dot{G}(1,n-3)$.
\end{theorem}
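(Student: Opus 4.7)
The plan is to combine Theorem~\ref{c3edge} with an equitable-partition analysis of the edge-extremal candidates $\dot G(s,t)$. Let $\dot G$ be a $\mathcal K_3^-$-free connected unbalanced signed graph of order $n$ with $\rho(\dot G)=\lambda_1(\dot G)$ maximum (the case $\rho(\dot G)=-\lambda_n(\dot G)$ is handled by an analogous argument using the eigenvector for $\lambda_n$). Pick a unit eigenvector $x$ for $\lambda_1$ and switch at $U=\{i:x_i<0\}$: this preserves $\mathcal K_3^-$-freeness, unbalance, and the whole spectrum, and turns $x$ nonnegative, so henceforth we may assume $x\geq 0$.

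\noindent\textbf{Reduction to $\dot G(s,t)$.} With $x\geq 0$, I would use the Rayleigh identity
\[
\lambda_1(\dot G)=2\sum_{ij\in E}\sigma(ij)x_ix_j
\]
together with the Rayleigh lower bound $\lambda_1(\dot G + ij^+)\geq\lambda_1(\dot G)+2x_ix_j$ (for adding a positively signed edge $ij$) to argue that the spectral-extremal $\dot G$ must attain the edge bound of Theorem~\ref{c3edge}. If $e(\dot G)<\binom{n}{2}-(n-2)$, one finds a non-edge $ij$ with $x_i,x_j>0$ and $\sigma(ik)\sigma(jk)=+1$ for every common neighbour $k$, so that inserting $ij$ with sign $+1$ preserves $\mathcal K_3^-$-freeness (and unbalance is preserved automatically since adding edges cannot kill existing negative cycles) and strictly increases $\lambda_1$, contradicting extremality. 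Theorem~\ref{c3edge} then forces $\dot G\sim\dot G(s,t)$ for some $s,t\geq 1$ with $s+t=n-2$.

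\noindent\textbf{Quotient and optimization.} For $\dot G(s,t)$ the partition $\{u\},\{v\},A,B$ with $|A|=s,|B|=t$ is equitable, with quotient
\[
B(s,t)=\begin{pmatrix}0 & -1 & s & 0\\ -1 & 0 & 0 & t\\ 1 & 0 & s-1 & t\\ 0 & 1 & s & t-1\end{pmatrix};
\]
the remaining $n-4$ eigenvalues of $\dot G(s,t)$ all equal $-1$ (from within-part sum-zero eigenvectors on the positive cliques $A,B$), so $\lambda_1(\dot G(s,t))$ is the largest root of $\det(\lambda I - B(s,t))$. Expanding the determinant and using $s+t=n-2$ gives, with $p=st$,
\[
P(\lambda,p)=\lambda^4-(n-4)\lambda^3-2(n-2)\lambda^2+2(p-1)\lambda+(3p+n-3).
\]
Implicit differentiation at the largest root yields $d\lambda_1/dp=-(2\lambda_1+3)/\partial_\lambda P<0$ (both numerator and denominator are positive at $\lambda_1>0$, because $P$ is a monic quartic crossing zero from below at its largest root), so $\lambda_1(\dot G(s,t))$ is strictly decreasing in $p=st$. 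The minimum of $st$ under $s+t=n-2,\ s,t\geq 1$ is $n-3$, attained only at $(s,t)=(1,n-3)$; substitution produces the factorisation $P(\lambda,n-3)=(\lambda^2-2)(\lambda^2-(n-4)\lambda-2(n-3))$, whose largest root is $\tfrac12(n-4+\sqrt{n^2-8})$, giving both the bound and the equality case.

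\noindent\textbf{Main obstacle.} The delicate step is the reduction: one needs a non-edge $ij$ which is simultaneously ($x_i,x_j>0$), insertable with sign $+1$ (no common neighbour $k$ with $\sigma(ik)\sigma(jk)=-1$), and does not destroy the eigenvector control. Note that switching at the conflicting $k$ flips both $\sigma(ik)$ and $\sigma(jk)$ and therefore preserves their product, so unblocking cannot come from local switching at $k$ alone and must be engineered from a more global argument exploiting the Perron-type structure of $x$ on the support of $\dot G$. The boundary case in which $x$ vanishes on too many vertices requires separate care (typically by deleting a zero-coordinate vertex and applying induction). Once these are handled, the quartic optimisation above finishes the proof.
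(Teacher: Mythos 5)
First, a point of reference: the paper does not prove Theorem~\ref{c3spectrum} --- it is quoted from \cite{WHL} --- so your proposal can only be judged on its own terms. The computational half is correct: for the natural reconstruction of $\dot G(s,t)$ (a negative edge $uv$, positive cliques $A$, $B$ with $|A|=s$, $|B|=t$ joined completely and positively to each other, $u$ joined positively to $A$ and $v$ to $B$), your quotient matrix is the right equitable quotient, the remaining $n-4$ eigenvalues are indeed $-1$, and I have verified the characteristic polynomial $P(\lambda,p)=\lambda^4-(n-4)\lambda^3-2(n-2)\lambda^2+2(p-1)\lambda+3p+n-3$, the factorisation at $p=n-3$, and the resulting value $\tfrac12\bigl(n-4+\sqrt{n^2-8}\bigr)$. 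The monotonicity in $p$ is better argued discretely (for $p'>p$ and $\lambda>-\tfrac32$ one has $P(\lambda,p')>P(\lambda,p)$, so $P(\cdot,p')>0$ on $[\lambda_1(p),\infty)$ and hence $\lambda_1(p')<\lambda_1(p)$) than by implicit differentiation, which tacitly assumes the largest root is simple; but that is cosmetic.

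The genuine gap is the reduction step, and it is not merely ``delicate'' as you put it --- as written it fails. Your argument would at best show that a spectral-extremal graph is $\mathcal K_3^-$-\emph{saturated} (no signed edge can be inserted), but saturation does not imply $e(\dot G)=\binom n2-(n-2)$, so Theorem~\ref{c3edge} cannot be invoked. Worse, even saturation is not delivered: when a non-edge $ij$ has one common neighbour $k$ with $\sigma(ik)\sigma(jk)=-1$ and another with product $+1$, neither sign of $ij$ is insertable, and as you correctly observe switching cannot unblock this. You flag this as the ``main obstacle'' but do not resolve it, so the passage from ``spectral-extremal'' to ``$\dot G\sim\dot G(s,t)$'' is unproven; a different mechanism is needed here (compare how the present paper's proof of Theorem~\ref{spectrum} avoids edge-addition entirely and instead splits on the edge count, handling the sub-extremal range by the degree--edge bound of Theorem~\ref{delta}). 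A secondary gap: the case $\rho(\dot G)=-\lambda_n(\dot G)$ is not ``analogous,'' since negation turns every positive triangle negative and so $-\dot G$ is generally not $\mathcal K_3^-$-free; the correct observation is that $\mathcal K_3^-$-freeness of $\dot G$ forces $\omega_b(-\dot G)\leqslant 2$, whence Lemma~\ref{balanced} gives $-\lambda_n(\dot G)\leqslant n/2$, which is below the target bound for $n\geqslant 5$ (small $n$ being checked directly).
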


Let $\mathcal{K}_4^-$ be the set of the unbalanced $K_4$. Up to switching equivalence, we have $\mathcal{K}_4^-=\{\dot{H_1},\dot{H_2}\}$, where $\dot{H_1}$ is the signed $K_4$ with exactly one negative edge and $\dot{H_2}$ is the signed $K_4$ with two independent negative edges. If $\dot{G}$ is a $\mathcal{K}_4^-$-free signed graph of order $n$ with maximum edges, then $e(\dot{G})=\frac{n(n-1)}{2}$ with equality holding if and only if $\dot{G}\sim(K_n,+)$. Therefore, we would focus the attention on $\mathcal{K}_4^-$-free unbalanced signed graphs. Let
$$\mathcal{G}=\big\{\dot{G_1}(a,b),\, \dot{G_1'}(1,n-4), \, \dot{G_2}(c,d), \, \dot{G_3}(1,n-5), \, \dot{G_4}(1,n-5), \, \dot{G_5}(1,n-5)\big\},$$
where $a+b=n-3$, $a$, $b\geqslant 0$, and $c+d=n-4$, $c$, $d\geqslant 1$ (see Fig. 1 and 2). For any signed graph $\dot{G}\in\mathcal{G}$, we have $\dot{G}$ is $\mathcal{K}_4^-$-free unbalanced, and
\begin{equation}\label{n-3}
e(\dot{G})=\frac{n(n-1)}{2}-(n-3).
\end{equation}

If $\dot{G}$ is a $\mathcal{K}_4^-$-free signed graph of order $n$ with maximum spectral radius, then $\rho(\dot{G})=n-1$ with equality holding if and only if $\dot{G}\sim(K_n,+)$. Therefore, we would focus the attention on $\mathcal{K}_4^-$-free unbalanced signed graphs. In Section 2, for any signed graph $\dot{G}\in\mathcal{G}$, we will prove that
$\lambda_1(\dot{G})\leqslant n-2,$
with equality holding if and only if $\dot{G}\sim \dot{G_1}(0,n-3)$.

Among all unbalanced signed graph, Tur$\rm\acute{a}$n number of $\mathcal{K}_4^-$ will be determined in Theorem \ref{edge}, and spectral Tur$\rm\acute{a}$n number of $\mathcal{K}_4^-$ will be determined in Theorem \ref{spectrum}. Their proofs will be presented in Section 3 and Section 4.

\begin{theorem}\label{edge}
Let $\dot{G}=(G,\sigma)$ be a $\mathcal{K}_4^-$-free unbalanced signed graph of order $n$ ($n\geqslant7$). Then
$$e(\dot{G})\leqslant \frac{n(n-1)}{2}-(n-3),$$
with equality holding if and only if $\dot{G}$ is switching equivalent to a signed graph in $\mathcal{G}$.
\end{theorem}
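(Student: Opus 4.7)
The plan is to split the proof into a short edge count for the inequality and a longer structural analysis for the equality case. First, I would reduce to the connected case: a disconnected signed graph of order $n$ satisfies $e(\dot{G})\leqslant\binom{n-1}{2}=\binom{n}{2}-(n-1)<\binom{n}{2}-(n-3)$, so the bound is strict. Second, if $\dot{G}$ contains no negative triangle, then $\dot{G}$ is also $\mathcal{K}_3^-$-free and Theorem~\ref{c3edge} already gives the stronger bound $\binom{n}{2}-(n-2)$. Hence for both the inequality and the equality analysis I may assume that $\dot{G}$ is connected and contains a negative triangle $T=\{u,v,w\}$. For every other vertex $x$, if $x$ is adjacent to all three of $u,v,w$ in the underlying graph, then $\{u,v,w,x\}$ spans a $K_4$ containing the negative triangle $T$, which is therefore unbalanced---contradicting $\mathcal{K}_4^-$-freeness. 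So each of the $n-3$ outside vertices is non-adjacent to at least one vertex of $T$, giving at least $n-3$ non-edges and hence $e(\dot{G})\leqslant\binom{n}{2}-(n-3)$.

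In the equality case $e(\dot{G})=\binom{n}{2}-(n-3)$, the count in the previous step is tight, so $V\setminus T$ induces a clique and each outside vertex misses exactly one vertex of $T$. Partition $V\setminus T=A\sqcup B\sqcup C$ according to which vertex of $T$ is missed, and switch so that $\sigma(uv)=-1$ and $\sigma(uw)=\sigma(vw)=+1$. Now for any two outside vertices $x,y$ in the same class (say $x,y\in C$, so both are adjacent to $u$ and $v$), the induced $K_4$ on $\{u,v,x,y\}$ must be balanced, forcing each of its four triangles to be positive. This pins down $\sigma(xy)$ in terms of $\sigma(xu),\sigma(yu)$ and yields $\sigma(xu)\sigma(xv)=-1$; analogous $K_4$'s on $\{v,w,x,y\}$ with $x,y\in A$ and on $\{u,w,x,y\}$ with $x,y\in B$ constrain the other two classes. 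Cross-class $K_4$'s inside $\{u,v,w,x,y\}$ with $x,y$ in distinct classes then couple these constraints, and absorbing the freedom to switch at individual outside vertices leaves a short list of admissible sign patterns indexed by $|A|,|B|,|C|$ and a few binary parameters. The cases where exactly one, exactly two, or all three of $A,B,C$ are nonempty will respectively produce the family $\dot{G_1}(a,b)$ together with $\dot{G_1'}(1,n-4)$, the family $\dot{G_2}(c,d)$, and the three sporadic graphs $\dot{G_3}(1,n-5)$, $\dot{G_4}(1,n-5)$, $\dot{G_5}(1,n-5)$, matching Figures~1 and 2.

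The main obstacle is the equality characterisation rather than the bound. The delicate step is the systematic enumeration of cross-class sign patterns: once two or three of the classes $A,B,C$ are nonempty, every pair of outside vertices from distinct classes together with two vertices of $T$ forms a $K_4$, and requiring each such $K_4$ to be balanced couples sign choices across classes in a nontrivial way. Performing this enumeration modulo vertex-switchings at outside vertices, without introducing spurious extremal graphs or omitting one of the six listed families, is where the careful case analysis is needed.
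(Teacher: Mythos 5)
Your argument for the inequality itself is correct and is actually cleaner than the paper's: after reducing to a connected graph containing a negative triangle $T=\{u,v,w\}$ (the disconnected and $\mathcal{K}_3^-$-free reductions are both fine), the single observation that a vertex adjacent to all of $T$ would create a $K_4$ containing a negative triangle, hence an unbalanced $K_4$, gives $n-3$ distinct non-edges at once, whereas the paper reaches the same count through a two-case analysis on $|N(u)\cap N(v)|$. Your setup for the equality case is also the right one and matches what the paper derives implicitly: $V\setminus T$ is a clique and the outside vertices partition into classes $A,B,C$ according to the unique missed vertex of $T$.

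The gap is in the equality characterisation, which you sketch but do not carry out, and the sketch contains concretely wrong steps. First, the coupling mechanism you name does not exist: if $x$ and $y$ miss \emph{different} vertices of $T$, then every $4$-subset of $\{u,v,w,x,y\}$ omits at least one edge, so there is no induced $K_4$ inside $\{u,v,w,x,y\}$ at all; the cross-class constraints must instead come from $K_4$'s such as $\{t,x,x',y\}$ with two vertices from one class and one from another, or from four outside vertices --- exactly the bookkeeping you defer. Second, your within-class relation $\sigma(xu)\sigma(xv)=-1$ requires a second vertex in the same class, and singleton classes are not a negligible boundary case: the sporadic graphs $\dot{G_4}(1,n-5)$ and $\dot{G_5}(1,n-5)$ exist precisely because a singleton common-neighbour class $\{w_1\}$ permits a second negative triangle $uvw_1$. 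Third, your stated correspondence between the number of nonempty classes and the output families is wrong: $\dot{G_1}(a,b)$ with $a,b\geqslant1$ has two nonempty classes, while $\dot{G_2}(c,d)$ \emph{and} all three sporadic graphs have all three classes nonempty; moreover one configuration (all three classes nonempty with two of them of size at least two) must be shown to be impossible rather than to yield a new extremal graph, which is how the paper's Subcase 2.2 ends in a contradiction via the unbalanced $K_4$ on $\{w_1,w_2,u_1,v_1\}$. This enumeration, performed modulo switching, is the bulk of the paper's proof of Theorem \ref{edge} (its Case 1 and Subcases 2.1--2.2), and your proposal does not yet contain it.
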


\begin{figure}
  \centering
  \begin{minipage}{0.24\linewidth}\label{F3}
  \centering
  \includegraphics[width=0.75\textwidth]{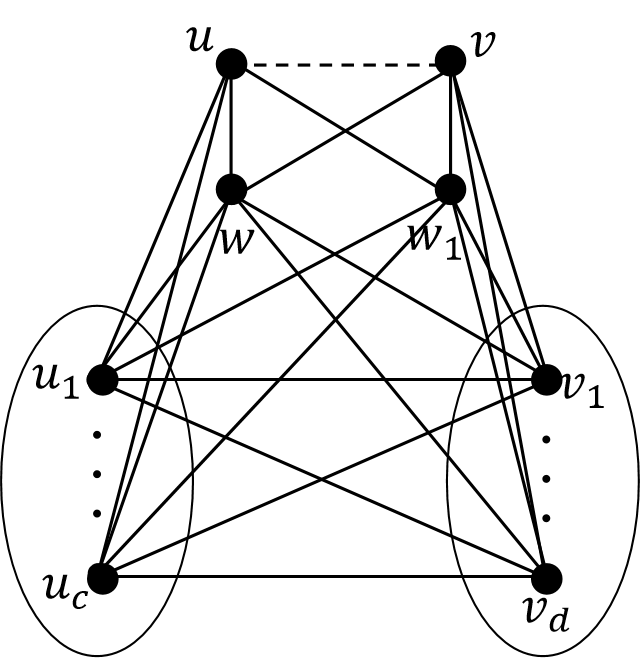}
  \caption*{\footnotesize{$\dot{G_2}(c,d)$}}
  \end{minipage}
  \begin{minipage}{0.24\linewidth}\label{F4}
  \centering
  \includegraphics[width=0.75\textwidth]{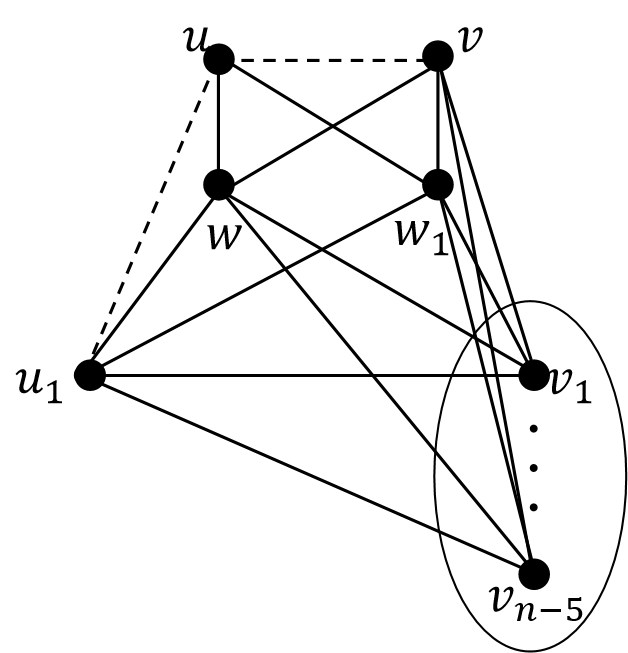}
  \caption*{\footnotesize{$\dot{G_3}(1,n-5)$}}
  \end{minipage}
 \begin{minipage}{0.24\linewidth}\label{F5}
  \centering
  \includegraphics[width=0.75\textwidth]{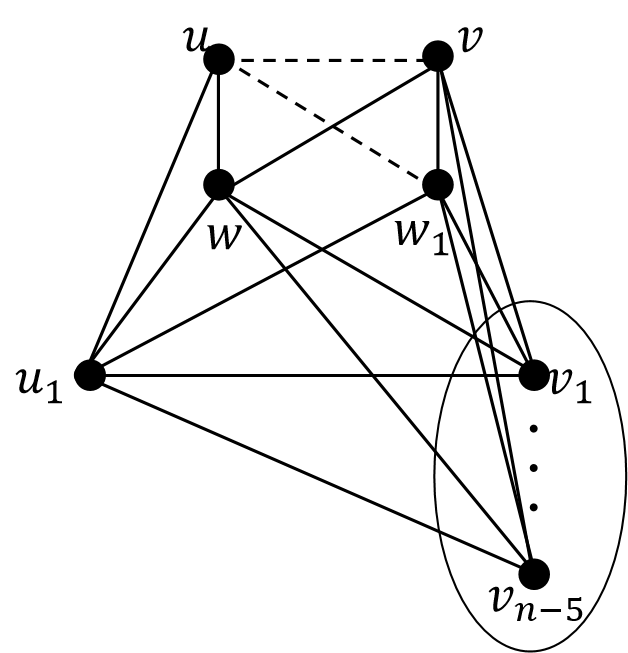}
  \caption*{\footnotesize{$\dot{G_4}(1,n-5)$}}
  \end{minipage}
 \begin{minipage}{0.24\linewidth}\label{F6}
  \centering
  \includegraphics[width=0.75\textwidth]{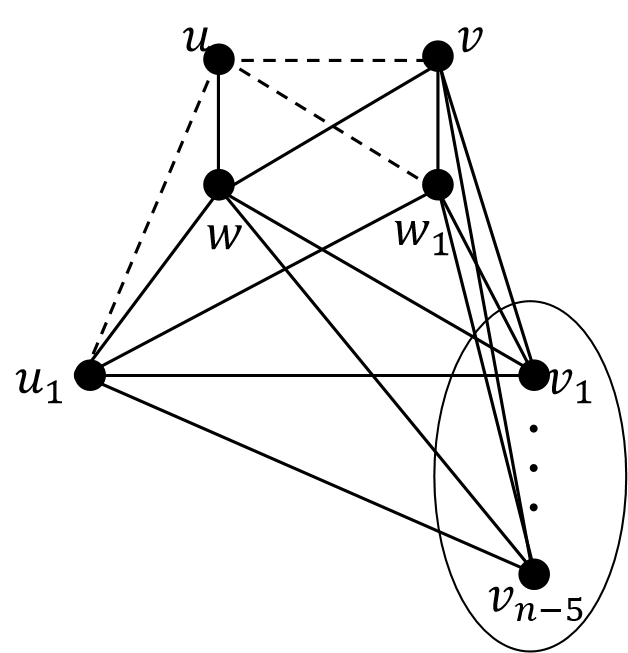}
  \caption*{\footnotesize{$\dot{G_5}(1,n-5)$}}
  \end{minipage}
  \caption{The signed graphs $\dot{G_2}(c,d)$, $\dot{G_3}(1,n-5)$, $\dot{G_4}(1,n-5)$, and $\dot{G_5}(1,n-5)$.}
\end{figure}

\begin{theorem}\label{spectrum}
Let $\dot{G}=(G,\sigma)$ be a $\mathcal{K}_4^-$-free unbalanced signed graph of order $n$. Then
$$\rho(\dot{G})\leqslant n-2,$$
with equality holding if and only if $\dot{G}\sim \dot{G_1}(0,n-3)$.
\end{theorem}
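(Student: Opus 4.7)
My approach combines the edge Turán result (Theorem~\ref{edge}) with an eigenvector perturbation argument, and is organised in three stages.

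First, I would verify $\rho(\dot{G_1}(0,n-3)) = n-2$ directly. The graph $\dot{G_1}(0,n-3)$ admits a small equitable partition, so its quotient matrix is a $k\times k$ matrix (with $k$ small), which I diagonalise by hand; the Section~2 claim already tells us that the largest root is $n-2$, and I would additionally compute the smallest eigenvalue to check $-\lambda_n < n-2$. In the same breath I would verify $-\lambda_n(\dot{G}) < n-2$ for every other $\dot{G}\in\mathcal{G}$. This shows the inequality is tight at the claimed extremal graph and simultaneously bounds both $\lambda_1$ and $-\lambda_n$ by $n-2$ for every member of $\mathcal{G}$.

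Next, I would prove $\lambda_1(\dot{G})\le n-2$ for every $\mathcal{K}_4^-$-free unbalanced $\dot{G}$. Argue by contradiction: among all $\mathcal{K}_4^-$-free unbalanced signed graphs violating the bound, choose one maximising $\lambda_1$, and WLOG take it connected. Let $\mathbf{x}$ be a unit $\lambda_1$-eigenvector; by the diagonal similarity $A(\dot{G})=S_U^{-1}A(\dot{G}_U)S_U$ from the introduction, switch so that $\mathbf{x}\ge 0$. Key observation: if $\{u,v\}$ is a non-edge of $\dot{G}$ with $x_u,x_v>0$ and if the signed graph $\dot{G}+uv^{+}$ obtained by adding a positive edge at $uv$ is still $\mathcal{K}_4^-$-free, then
\[
\lambda_1(\dot{G}+uv^{+}) \;\ge\; \mathbf{x}^{\top}A(\dot{G}+uv^{+})\mathbf{x} \;=\; \lambda_1(\dot{G})+2x_ux_v \;>\; \lambda_1(\dot{G}),
\]
contradicting maximality (unbalancedness is automatic because adding edges preserves every existing negative cycle). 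Pushing this to its limit, every such non-edge must create an unbalanced $K_4$ once coloured positive; a careful case analysis of the resulting saturation condition, together with Theorem~\ref{edge}, forces $\dot{G}\in\mathcal{G}$, and the Section~2 bound then produces the contradiction and pins down the equality case $\dot{G}\sim \dot{G_1}(0,n-3)$.

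Finally, I would bound $-\lambda_n(\dot{G})\le n-2$. Here one cannot simply replace $A(\dot{G})$ by $-A(\dot{G})$, because the sign-reversed signed graph need not be $\mathcal{K}_4^-$-free (a positive $K_4$ becomes an all-negative $K_4$, which is unbalanced). Instead, let $\mathbf{y}$ be a $\lambda_n$-eigenvector; switch so that its nonzero entries have a convenient sign pattern, and attempt edge-augmentation in which the added edge $uv$ receives sign $-\mathrm{sign}(y_uy_v)$, so that $\mathbf{y}^{\top}A(\dot{G}+uv)\mathbf{y}=\lambda_n(\dot{G})-2|y_uy_v|$, i.e.\ $\lambda_n$ drops further. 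Tracking that the augmented graph remains $\mathcal{K}_4^-$-free unbalanced, iterate to reach an edge-maximal member of $\mathcal{G}$ and conclude by the computation on $\mathcal{G}$ done in the first stage.

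\textbf{Main obstacle.} The hardest step is the $-\lambda_n$ estimate: the smallest eigenvalue of a signed graph has no clean Perron--Frobenius theory, and the sign $-\mathrm{sign}(y_uy_v)$ demanded by edge-augmentation may collide with the $\mathcal{K}_4^-$-free constraint or be defeated by zero entries of $\mathbf{y}$. The bulk of the technical work lies in showing that whenever $-\lambda_n(\dot{G})\ge n-2$, one can in fact augment $\dot{G}$, edge by edge, inside the $\mathcal{K}_4^-$-free unbalanced class without decreasing $|\lambda_n|$, until a member of $\mathcal{G}$ is reached; the two case obstructions --- creating an unbalanced $K_4$ and losing control through vanishing eigenvector coordinates --- have to be handled by an explicit local-structure argument near the vertex of largest $|y_v|$.
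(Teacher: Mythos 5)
Your stage 1 matches the paper (Section 2 computes $\lambda_1$ for every member of $\mathcal{G}$ via equitable quotient matrices), but stages 2 and 3 each contain a genuine gap. In stage 2, the saturation argument only tells you that every non-edge $uv$ with $x_ux_v>0$ would create an unbalanced $K_4$ if added positively; it does not bound the number of non-edges, so it cannot by itself ``force $\dot{G}\in\mathcal{G}$'' --- the members of $\mathcal{G}$ have exactly $\frac{n(n-1)}{2}-(n-3)$ edges, and nothing in your argument rules out a spectral maximizer with far fewer edges (nor do you handle zero coordinates of $\mathbf{x}$, where the perturbation gives no strict increase). The paper closes exactly this gap quantitatively: after showing the maximizer is connected with $\delta\geqslant 2$, it applies the Hong--Shu--Fang bound $\rho(G)\leqslant\frac{\delta-1+\sqrt{8e-4\delta n+(\delta+1)^2}}{2}$ to the \emph{underlying} graph to show that $e(\dot{G})\leqslant\frac{n(n-1)}{2}-(n-2)$ already forces $\rho(G)\leqslant n-2$, and then uses the strict inequality $\lambda_1(\dot{G})<\lambda_1(G)$ for connected unbalanced graphs (Lemma~\ref{unbalanced}) to reach a contradiction; only then does Theorem~\ref{edge} apply and place $\dot{G}$ in $\mathcal{G}$, after which Lemma~\ref{largest} pins down $\dot{G_1}(0,n-3)$.

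Stage 3 is where you go most astray. You correctly note that $-\dot{G}$ need not be $\mathcal{K}_4^-$-free, but you then propose a delicate edge-augmentation scheme for $\lambda_n$ that you yourself identify as the main obstacle and do not complete. The point you are missing is that one does not need $-\dot{G}$ to be $\mathcal{K}_4^-$-free: a balanced $K_4$ in $-\dot{G}$ negates to a $K_4$ in $\dot{G}$ all of whose triangles are negative, i.e.\ an unbalanced $K_4$, so $\mathcal{K}_4^-$-freeness of $\dot{G}$ gives $\omega_b(-\dot{G})\leqslant 3$. The signed Wilf-type bound (Lemma~\ref{balanced}) then yields $-\lambda_n(\dot{G})=\lambda_1(-\dot{G})\leqslant\frac{2}{3}n<n-2$ for $n\geqslant 7$, disposing of the smallest eigenvalue in two lines and showing $\rho(\dot{G})=\lambda_1(\dot{G})$ from the outset. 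Without this observation (or a worked-out substitute), your proposal does not establish the theorem.
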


\section{The indices of signed graphs in $\mathcal{G}$}
For any signed graph $\dot{G}$ in $\mathcal{G}$, we will show that $\lambda_1(\dot{G})\leqslant n-2$, with equality holding if and only if $\dot{G}\sim\dot{G_1}(0,n-3)$. The equitable quotient matrix technique and Cauchy Interlacing Theorem are two main tools in our proof.

Let $M$ be a real symmetric matrix with the following block form
$$M=\left(
      \begin{array}{ccc}
        M_{11} & \cdots & M_{1m} \\
        \vdots & \ddots & \vdots \\
        M_{m1} & \cdots & M_{mm} \\
      \end{array}
    \right).
$$
For $1\leqslant i,j\leqslant m$, let $q_{ij}$ denote the average row sum of $M_{ij}$. The matrix $Q=(q_{ij})$ is called the quotient matrix of $M$. Moreover, if for each pair $i,j$, $M_{ij}$ has a constant row sum, then $Q$ is called a equitable quotient matrix of $M$.
\begin{lemma}\label{equitable}\cite{BH}
Let $Q$ be an equitable quotient matrix of matrix $M$. Then the matrix $M$ has the following two kinds of eigenvalues.

(1) The eigenvalues coincide with the eigenvalues of $Q$.

(2) The eigenvalues of $M$ not in {\rm Spec}($Q$) remain unchanged if some scalar multiple of the all-one block $J$ is added to block $M_{ij}$ for each $1\leqslant i,j\leqslant m$.

Furthermore, if $M$ is nonnegative and irreducible, then $\lambda_1(M)=\lambda_1(Q).$
\end{lemma}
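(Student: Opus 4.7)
The plan is to introduce the characteristic matrix $C$ of the block partition and deduce all three assertions from the single matrix identity $MC=CQ$. Let the partition of the index set be $V_1,\ldots,V_m$, and let $C\in\mathbb{R}^{n\times m}$ denote the $0/1$-matrix whose $i$-th column is the indicator vector $\mathbf{1}_{V_i}$. Because each block $M_{ij}$ has constant row sum $q_{ij}$, the vector $M\mathbf{1}_{V_j}$ is constant on every $V_i$ with value $q_{ij}$ there, which is precisely the equation $MC=CQ$.

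For assertion~(1), if $Qv=\lambda v$ with $v\neq 0$, then $M(Cv)=C(Qv)=\lambda(Cv)$ and $Cv\neq 0$ since the columns of $C$ have disjoint supports; thus every eigenvalue of $Q$ is an eigenvalue of $M$ with an eigenvector in the column space $\mathcal{C}:=\mathrm{im}(C)$. Conversely, because $C$ has full column rank, any eigenvector of $M$ that happens to lie in $\mathcal{C}$ descends to an eigenvector of $Q$ with the same eigenvalue. For the Perron conclusion I would then observe that $\mathcal{C}$ is $M$-invariant (by the identity) and, since $M$ is symmetric, so is $\mathcal{C}^\perp$. Under the nonnegative irreducible hypothesis, Perron-Frobenius supplies a strictly positive eigenvector $x$ for $\lambda_1(M)$; its orthogonal projection onto $\mathcal{C}$ is nonzero because each $\langle x,\mathbf{1}_{V_i}\rangle>0$, and this projection is again an eigenvector of $M$ with eigenvalue $\lambda_1(M)$ (orthogonal projection commutes with $M$ on $M$-invariant subspaces). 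Hence $\lambda_1(M)\in\mathrm{Spec}(Q)$, so $\lambda_1(M)\leqslant\lambda_1(Q)$, while the reverse inequality is immediate from~(1).

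For assertion~(2), adding a scalar multiple of the all-ones block $J$ to block $M_{ij}$ amounts to adding a rank-one matrix of the form $\alpha_{ij}\mathbf{1}_{V_i}\mathbf{1}_{V_j}^{T}$ to $M$, whose row and column spaces both sit inside $\mathcal{C}$. Summing over all pairs produces a symmetric perturbation $E$ with $\mathrm{col}(E)\subseteq\mathcal{C}$, so the modified matrix $M':=M+E$ satisfies $M'|_{\mathcal{C}^\perp}=M|_{\mathcal{C}^\perp}$. Splitting the spectrum along the orthogonal $M$-invariant decomposition $\mathbb{R}^n=\mathcal{C}\oplus\mathcal{C}^\perp$ yields $\mathrm{Spec}(M)=\mathrm{Spec}(Q)\sqcup\mathrm{Spec}(M|_{\mathcal{C}^\perp})$ (counted with multiplicity), and only the first summand is altered by the perturbation, so every eigenvalue of $M$ lying outside $\mathrm{Spec}(Q)$ survives unchanged.

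The only conceptually delicate step is the Perron argument in part~(3): one must verify that the strictly positive eigenvector has a nonzero component in $\mathcal{C}$, and this follows at once from positivity of each indicator vector $\mathbf{1}_{V_i}$. Everything else is a bookkeeping consequence of the identity $MC=CQ$ and the invariant orthogonal splitting $\mathbb{R}^n=\mathcal{C}\oplus\mathcal{C}^\perp$.
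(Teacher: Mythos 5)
Your argument is correct: the identity $MC=CQ$, the $M$-invariance of $\mathrm{im}(C)$ and (by symmetry) of its orthogonal complement, and the positivity of the Perron vector together give all three assertions, and this is exactly the standard proof of the result in Brouwer--Haemers. The paper itself offers no proof --- it only cites \cite{BH} --- so there is nothing to contrast; your write-up is a faithful, self-contained version of the reference's argument.
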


\begin{lemma}\label{interlacing}\cite{BH}
Let $A$ be a symmetric matrix of order $n$ with eigenvalues $\lambda_1\geqslant \lambda_2\geqslant\cdots\geqslant\lambda_n$ and $B$ be a principal submatrix of $A$ of order $m$ with eigenvalues $\mu_1\geqslant\mu_2\geqslant\cdots\geqslant\mu_m$. Then the eigenvalues of $B$ interlace the eigenvalues of $A$, that is, $\lambda_i\geqslant\mu_i\geqslant\lambda_{n-m+i}$ for $i=1,\cdots,m$.
\end{lemma}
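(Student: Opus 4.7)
The plan is to derive the interlacing inequalities from the Courant--Fischer min--max characterization of eigenvalues of a real symmetric matrix. This is the cleanest route, because it converts the eigenvalue comparison into a comparison of Rayleigh quotients over subspaces, and the coordinate embedding $\mathbb{R}^{m}\hookrightarrow\mathbb{R}^{n}$ associated with passing to a principal submatrix interacts with such subspaces in a trivial way.

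First I would recall (or briefly re-derive via the spectral theorem, diagonalizing $A$ in an orthonormal eigenbasis and using a dimension-count argument on the span of the top or bottom eigenvectors) the two equivalent formulas
\begin{equation*}
\lambda_i(A)\;=\;\max_{\dim S=i}\;\min_{0\ne x\in S}\;\frac{x^{T}Ax}{x^{T}x}\;=\;\min_{\dim S=n-i+1}\;\max_{0\ne x\in S}\;\frac{x^{T}Ax}{x^{T}x}.
\end{equation*}
Next, let $I\subseteq\{1,\dots,n\}$ be the index set of size $m$ to which $B$ corresponds, and set $V=\{x\in\mathbb{R}^{n}:x_j=0\text{ for all }j\notin I\}$. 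The coordinate embedding $\iota:\mathbb{R}^{m}\to V$ is an isometric isomorphism, and by the definition of a principal submatrix one has $(\iota y)^{T}A(\iota y)=y^{T}By$ for every $y\in\mathbb{R}^{m}$; in particular, Rayleigh quotients are preserved.

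For the upper bound $\mu_i\leqslant\lambda_i$, I would choose an $i$-dimensional subspace $T\subseteq\mathbb{R}^{m}$ realizing the max--min for $\mu_i(B)$, so that $\min_{0\ne y\in T} y^{T}By/y^{T}y=\mu_i$. Its image $\iota(T)$ is an $i$-dimensional subspace of $\mathbb{R}^{n}$ on which the Rayleigh quotient of $A$ agrees with that of $B$, hence $\min_{0\ne x\in\iota(T)}x^{T}Ax/x^{T}x=\mu_i$. The max--min form then yields $\lambda_i(A)\geqslant\mu_i$. Symmetrically, for the lower bound $\mu_i\geqslant\lambda_{n-m+i}$, I would take $T\subseteq\mathbb{R}^{m}$ of dimension $m-i+1$ realizing the min--max for $\mu_i(B)$; then $\dim\iota(T)=m-i+1=n-(n-m+i)+1$, and the min--max form gives $\lambda_{n-m+i}(A)\leqslant\max_{0\ne x\in\iota(T)}x^{T}Ax/x^{T}x=\mu_i$.

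There is no real obstacle here: once Courant--Fischer is in hand, both inequalities reduce to the single one-line observation that the restriction of $A$ to $V$ is (up to $\iota$) the matrix $B$, so every candidate subspace used in the variational formula for $\mu_i$ transports to a candidate of the same dimension, and the same Rayleigh quotient value, in the variational formula for $\lambda_i$ or $\lambda_{n-m+i}$. The only points that deserve explicit mention are the injectivity of $\iota$ (so dimensions are preserved) and the identity of Rayleigh quotients on $V$ (so optimum values are preserved); these are immediate, and the two inequalities then drop out in parallel as instances of the max--min and min--max characterizations applied to the enlarged subspace $\iota(T)\subseteq\mathbb{R}^{n}$.
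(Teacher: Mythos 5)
This lemma is stated in the paper as a quoted result from \cite{BH} with no proof supplied, so there is no internal argument to compare against; the only question is whether your proof stands on its own, and it does. The Courant--Fischer route is carried out correctly: the coordinate embedding $\iota$ preserves dimensions and Rayleigh quotients, the max--min form with an optimal $i$-dimensional $T$ for $\mu_i$ gives $\lambda_i\geqslant\mu_i$, and the index bookkeeping for the other side is right, since $\dim\iota(T)=m-i+1=n-(n-m+i)+1$ is exactly the dimension required in the min--max formula for $\lambda_{n-m+i}$. This is essentially the standard textbook proof. For what it is worth, the proof in \cite{BH} is phrased slightly differently: it works with an $n\times m$ matrix $S$ satisfying $S^{T}S=I$ and $B=S^{T}AS$, picks for each $i$ a nonzero vector in the intersection of the span of the top $i$ eigenvectors of $B$ with the orthogonal complement of the images of the top $i-1$ eigenvectors of $A$ (a dimension count shows this intersection is nontrivial), and deduces $\lambda_i\geqslant\mu_i$ from a single Rayleigh-quotient comparison, obtaining the second inequality by applying the first to $-A$ and $-B$. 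That version avoids invoking the full min--max characterization but proves in effect the same dimension-counting fact; the two arguments are interchangeable, and yours is complete as written.
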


The clique number of a graph $G$, denoted by $\omega(G)$, is the maximum order of a clique in $G$. The balanced clique number of a signed graph $\dot{G}$, denoted by $\omega_b(\dot{G})$, is the maximum order of a  balanced clique in $\dot{G}$.

\begin{lemma}\label{lower}\cite{W} Let $G$ be a graph of order $n$. Then
$$\lambda_1(G)\leqslant\left(1-\frac{1}{\omega(G)}\right)n.$$
\end{lemma}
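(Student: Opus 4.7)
My plan is to prove this via the Motzkin--Straus variational formula combined with the Cauchy--Schwarz inequality. First I would invoke Perron--Frobenius: since $A(G)$ is a nonnegative symmetric matrix, there exists a unit eigenvector $x\in\mathbb{R}^n$ with $x\geqslant 0$ for which $\lambda_1(G)=x^{T}A(G)x$. Thus one may restrict the Rayleigh quotient optimization to the nonnegative orthant without loss.

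Next I would apply the Motzkin--Straus theorem:
$$\max_{y\in\Delta_n}\, y^{T}A(G)y \;=\; 1-\frac{1}{\omega(G)},$$
where $\Delta_n=\{y\in\mathbb{R}^n:y\geqslant 0,\ y_1+\cdots+y_n=1\}$ is the standard simplex. Rescaling the Perron vector as $y=x/\|x\|_1\in\Delta_n$ and using the homogeneity of the quadratic form,
$$\lambda_1(G)=x^{T}A(G)x=\|x\|_1^{2}\cdot y^{T}A(G)y \;\leqslant\; \|x\|_1^{2}\left(1-\frac{1}{\omega(G)}\right).$$
To finish, the Cauchy--Schwarz inequality yields $\|x\|_1\leqslant \sqrt{n}\,\|x\|_2=\sqrt{n}$, and hence $\lambda_1(G)\leqslant \left(1-\frac{1}{\omega(G)}\right)n$, as claimed.

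The only substantive ingredient is the Motzkin--Straus identity; the Perron--Frobenius reduction and the $\ell^{1}$--$\ell^{2}$ comparison are routine. If one insisted on a self-contained argument, the main obstacle would be establishing Motzkin--Straus, for which the standard route is to pick an optimizer $y^{*}$ on $\Delta_n$ whose support $S$ has minimum cardinality, then apply the KKT stationarity conditions together with a symmetrization that transfers mass between two non-adjacent vertices in $S$ to force $S$ to induce a clique; since the uniform distribution on a $k$-clique attains $1-1/k$, taking the maximum over $k\leqslant \omega(G)$ completes the proof. Equality in the final bound requires equality in Cauchy--Schwarz (so the Perron vector is the all-ones vector, hence $G$ is regular) together with equality in Motzkin--Straus, which, combined with regularity, is consistent with $G$ being a balanced complete multipartite graph $T_{\omega(G)}(n)$.
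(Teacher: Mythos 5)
Your argument is correct: Perron--Frobenius gives a nonnegative unit eigenvector (for a disconnected $G$, take the Perron vector of the component attaining $\lambda_1$ and extend by zero), Motzkin--Straus bounds the Rayleigh quotient on the simplex by $1-\frac{1}{\omega(G)}$, and Cauchy--Schwarz gives $\|x\|_1^2\leqslant n$. The paper does not prove this lemma but simply cites Wilf \cite{W}, and your proof is essentially Wilf's original argument from that reference, so there is nothing to add.
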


\begin{lemma}\label{balanced}\cite{WYQ}
Let $\dot{G}$ be a signed graph of order $n$. Then
$$\lambda_1(\dot{G})\leqslant\left(1-\frac{1}{\omega_b(\dot{G})}\right)n.$$
\end{lemma}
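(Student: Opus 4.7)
The plan is to reduce the signed problem to the unsigned Wilf-type bound in Lemma \ref{lower} by means of a switching argument on the top eigenvector.

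First I would pick a unit eigenvector $x$ of $A(\dot{G})$ associated with $\lambda_1(\dot{G})$, and let $U=\{i\in V(\dot{G}):x_i\geqslant 0\}$. With the corresponding $S_U=\mathrm{diag}(s_1,\ldots,s_n)$ as in the introduction, I would set $y=S_Ux$, so that $y=|x|\geqslant 0$ and $\|y\|_2=1$. Using the conjugation relation $A(\dot{G})=S_U^{-1}A(\dot{G}_U)S_U$ stated in Section 1, a direct calculation gives $A(\dot{G}_U)y=\lambda_1(\dot{G})y$; in particular,
$$\lambda_1(\dot{G})=y^{\top}A(\dot{G}_U)y.$$

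Next I would split the edges of $\dot{G}_U$ into positive and negative ones and write $A(\dot{G}_U)=A(G^+)-A(G^-)$, where $G^+$ and $G^-$ are the unsigned subgraphs formed by the positive and negative edges, respectively. Because $y\geqslant 0$ and $A(G^-)\geqslant 0$ entrywise,
$$\lambda_1(\dot{G})=y^{\top}A(G^+)y-y^{\top}A(G^-)y\leqslant y^{\top}A(G^+)y\leqslant \lambda_1(G^+),$$
and Lemma \ref{lower} applied to $G^+$ (viewed on $n$ vertices) yields $\lambda_1(G^+)\leqslant (1-1/\omega(G^+))n$.

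The final step is the observation $\omega(G^+)\leqslant \omega_b(\dot{G})$: any clique of $G^+$ is an all-positive, hence balanced, clique of $\dot{G}_U$, and since the signs of cycles are preserved by switching, the same vertex set is also a balanced clique of $\dot{G}$. Chaining the three inequalities delivers
$$\lambda_1(\dot{G})\leqslant \left(1-\frac{1}{\omega(G^+)}\right)n\leqslant \left(1-\frac{1}{\omega_b(\dot{G})}\right)n.$$

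The only point I expect to need real care is the switching-and-absolute-value move at the start: once one sees that conjugating by $S_U$ turns the top eigenvector into a non-negative one without disturbing the spectrum, the signed Rayleigh quotient is dominated by that of the positive-edge subgraph and the problem collapses to Lemma \ref{lower} together with the switching invariance of balance.
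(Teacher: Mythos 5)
The paper does not prove this lemma at all: it is quoted from \cite{WYQ} as a known result, so there is no internal proof to compare against. Your argument, however, is correct and self-contained. Each step checks out: with $U=\{i:x_i\geqslant 0\}$ and the paper's convention $s_i=1$ for $i\in U$, $s_i=-1$ otherwise, one indeed gets $y=S_Ux=|x|\geqslant 0$ and $A(\dot{G}_U)y=\lambda_1(\dot{G})y$; the split $A(\dot{G}_U)=A(G^+)-A(G^-)$ with $y\geqslant 0$ kills the negative part of the Rayleigh quotient; Wilf's bound (Lemma \ref{lower}) applies to $G^+$ on all $n$ vertices; and a clique of $G^+$ is an all-positive clique of $\dot{G}_U$, hence (switching preserves cycle signs) a balanced clique of $\dot{G}$, giving $\omega(G^+)\leqslant\omega_b(\dot{G})$ and the monotonicity step $1-1/\omega(G^+)\leqslant 1-1/\omega_b(\dot{G})$. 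It is worth noting that your first two steps are exactly the content of Lemma \ref{spanning} and Remark \ref{spanning2} (Stani\'{c}'s balanced spanning subgraph, obtained by switching so the top eigenvector is nonnegative and then deleting negative edges), so your proof amounts to the chain ``Lemma \ref{spanning} $+$ Remark \ref{spanning2} $+$ Lemma \ref{lower} $+$ switching-invariance of balance''; this shows the cited lemma is actually derivable from tools the paper already assembles, which is a tidy observation even though the authors chose simply to cite \cite{WYQ}.
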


\begin{lemma}\cite{S}\label{spanning}
Every signed graph $\dot{G}$ contains a balanced spanning subgraph, say $\dot{H}$, which satisfies $\lambda_1(\dot{G})\leqslant\lambda_1(\dot{H})$.
\end{lemma}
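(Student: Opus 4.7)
The plan is to use the switching operation to arrange for the top eigenvector of $\dot{G}$ to be nonnegative, and then delete all negative edges to obtain the required balanced spanning subgraph. First I would take a unit eigenvector $\mathbf{x}$ of $A(\dot{G})$ for the eigenvalue $\lambda_1(\dot{G})$, set $U=\{v\in V(\dot{G}):x_v<0\}$, and let $\dot{G}'=\dot{G}_U$. Since $A(\dot{G}')=S_U A(\dot{G})S_U$, the vector $\mathbf{y}:=S_U\mathbf{x}$ is a unit eigenvector of $A(\dot{G}')$ with the same eigenvalue $\lambda_1(\dot{G})$, and by construction $\mathbf{y}\geqslant 0$ entrywise.

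Next I would let $\dot{H}'$ be the spanning subgraph of $\dot{G}'$ whose edge set consists of precisely the positive edges of $\dot{G}'$, so that $\dot{H}'$ contains no negative edge and is therefore balanced. Writing the quadratic form of $A(\dot{G}')$ as a difference of sums over positive and negative edges and using $\mathbf{y}\geqslant 0$, the contribution from the negative edges is nonpositive, so
\begin{equation*}
\lambda_1(\dot{G})=\mathbf{y}^{T}A(\dot{G}')\mathbf{y}\leqslant\mathbf{y}^{T}A(\dot{H}')\mathbf{y}\leqslant\lambda_1(\dot{H}')
\end{equation*}
by Rayleigh's principle. To return to the original signs, set $\dot{H}=(\dot{H}')_U$. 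Switching preserves cycle signs, so $\dot{H}$ is balanced; it preserves the spectrum, so $\lambda_1(\dot{H})=\lambda_1(\dot{H}')\geqslant\lambda_1(\dot{G})$; and every edge of $\dot{H}$ carries the same sign in $\dot{G}$ as in $\dot{H}$, because switching by $U$ on each retained edge exactly undoes the first switching, so $\dot{H}$ is a signed spanning subgraph of $\dot{G}$ in the strict sense.

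The single conceptual step, and the one I expect to be the main obstacle for a reader seeing this for the first time, is the opening move: although $A(\dot{G})$ is not a nonnegative matrix in general, the identity $A(\dot{G}_U)=S_U A(\dot{G})S_U$ lets us pick a switching-equivalent graph in which the $\lambda_1$-eigenvector is entrywise nonnegative, effectively supplying a Perron-type normalization. Once this is available, the remainder is the standard edge-deletion / Rayleigh-quotient argument from unsigned spectral graph theory.
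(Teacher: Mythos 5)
Your proposal is correct and follows exactly the route the paper relies on: the paper cites this lemma from Stani\'c and, in Remark \ref{spanning2}, describes precisely your construction --- switch by the negative support of the top eigenvector so that the eigenvector becomes entrywise nonnegative, then delete all negative edges and apply the Rayleigh quotient. Your additional step of switching back by $U$ to realize $\dot{H}$ as a genuine signed subgraph of $\dot{G}$ is a harmless (indeed slightly more careful) refinement of the same argument.
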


\begin{remark}\cite{WHL}\label{spanning2}
There is a switching equivalent graph $\dot{G}_U$ such that eigenvector $\bm{x}$ of $A(\dot{G}_{U})$ corresponding to $\lambda_1(\dot{G})$ is non-negative. By the proof of \cite[Theorem 3.1]{S}, the balanced spanning subgraph $\dot{H}$ in Lemma \ref{spanning} may be obtained from $\dot{G}_U$ by removing all negative edges.
\end{remark}

\begin{lemma}\label{unbalanced}
If $\dot{G}$ is a connected signed graph, then $\lambda_1(\dot{G})\leqslant\lambda_1(G)$. Moreover, the equality holds if and only if $\dot{G}$ is balanced.
\end{lemma}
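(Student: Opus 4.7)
The plan is to use the Rayleigh quotient together with an absolute-value comparison, in the spirit of the standard Perron--Frobenius argument for comparing a matrix with its entrywise absolute value.

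First I would let $\bm{x}$ be a unit eigenvector of $A(\dot{G})$ associated with $\lambda_1(\dot{G})$ and form the vector $|\bm{x}|$ with entries $|x_i|$. Writing the quadratic form explicitly,
\[
\lambda_1(\dot{G})=\bm{x}^{\top}A(\dot{G})\bm{x}=\sum_{uv\in E(G)}2\sigma(uv)x_u x_v\leqslant \sum_{uv\in E(G)}2|x_u||x_v|=|\bm{x}|^{\top}A(G)|\bm{x}|,
\]
and since $|\bm{x}|$ is a unit vector, the Rayleigh characterization of $\lambda_1(G)$ yields $\lambda_1(\dot{G})\leqslant \lambda_1(G)$. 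This settles the inequality part without much work.

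For the equality case, I would exploit the connectedness of $G$. If $\lambda_1(\dot{G})=\lambda_1(G)$, then $|\bm{x}|$ achieves the maximum in the Rayleigh quotient of $A(G)$, so $|\bm{x}|$ is itself an eigenvector of $A(G)$ for $\lambda_1(G)$. Because $A(G)$ is nonnegative and irreducible (as $G$ is connected), Perron--Frobenius forces $|\bm{x}|>0$, so every $x_i\neq 0$. Returning to the inequality above, equality of the sums forces $\sigma(uv)x_ux_v=|x_u||x_v|$ for every edge $uv$, i.e.\ $\sigma(uv)=\operatorname{sgn}(x_u)\operatorname{sgn}(x_v)$. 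Setting $U=\{i:x_i<0\}$, this sign pattern is exactly the one removed by switching at $U$: all edges of $\dot{G}_U$ become positive, hence $\dot{G}\sim (G,+)$ and $\dot{G}$ is balanced. Conversely, if $\dot{G}$ is balanced then $\dot{G}\sim(G,+)$, and switching preserves the spectrum, so $\lambda_1(\dot{G})=\lambda_1(G)$.

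I do not foresee a major obstacle: the inequality step is a one-line triangle-inequality computation, and the only place one has to be careful is in the equality case, where the non-vanishing of $\bm{x}$ (guaranteed by Perron--Frobenius on the connected underlying graph) is essential for translating the sign pattern into a bona fide switching set $U$. This is also precisely the place where the connectedness hypothesis is used.
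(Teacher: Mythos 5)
Your proof is correct, but it takes a different route from the paper's. The paper invokes Stani\'c's result (Lemma \ref{spanning} together with Remark \ref{spanning2}): after switching so that the top eigenvector is nonnegative, deleting the negative edges yields a balanced spanning subgraph $\dot{H}$ with $\lambda_1(\dot{G})\leqslant\lambda_1(\dot{H})$, and then Perron--Frobenius monotonicity for subgraphs gives $\lambda_1(\dot{H})\leqslant\lambda_1(G)$; equality forces $A(\dot{H})=A(G)$, i.e.\ no negative edges were present after switching. You instead run the classical entrywise absolute-value argument $\lambda_1(A)\leqslant\lambda_1(|A|)$ directly via the Rayleigh quotient, which is self-contained and avoids the cited lemma entirely --- in effect you re-derive the content of Remark \ref{spanning2} on the fly. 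Your handling of the equality case is sound: termwise equality in $\sigma(uv)x_ux_v\leqslant|x_u||x_v|$ only pins down $\sigma(uv)$ on edges where both endpoints are nonzero, and you correctly use irreducibility of $A(G)$ to force $|\bm{x}|>0$ before reading off the switching set $U=\{i:x_i<0\}$; this is exactly where connectedness enters, just as it does (via Perron--Frobenius) in the paper's version. The one background fact you share with the paper is that a balanced connected signed graph is switching equivalent to $(G,+)$ (Harary's theorem), which both proofs use for the converse direction. Net effect: your argument is more elementary and portable; the paper's is shorter given the machinery it has already set up.
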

\begin{proof}
Let $\dot{G}_U$ be a signed graph defined in Remark \ref{spanning2}, and $\dot{H}$ be a spanning subgraph of $\dot{G}_{U}$ by removing all negative edges and $\lambda_1(\dot{G})\leqslant\lambda_1(\dot{H})$. Thus $A(\dot{H})$ is a nonnegative matrix, $A(G)$ is a nonnegative irreducible matrix, and $\dot{H}$ is a subgraph of $G$. By Perron-Frobenius Theorem, we know that $\lambda_1(\dot{H})\leqslant \lambda_1(G)$. So, $\lambda_1(\dot{G})\leqslant\lambda_1(G)$.

If $\lambda_1(\dot{G})=\lambda_1(G)$, then $\lambda_1(\dot{H})= \lambda_1(G)$. By Perron-Frobenius Theorem, we know that $A(\dot{H})=A(G)$ and then $\dot{H}=G$ and $\dot{G}_U=(G,+)$, so $\dot{G}$ is balanced. If $\dot{G}$ is balanced, then $\dot{G}\sim(G,+)$, and then $\lambda_1(\dot{G})=\lambda_1(G)$.
\end{proof}

Recall that
$$\mathcal{G}=\big\{\dot{G_1}(a,b),\, \dot{G_1'}(1,n-4), \, \dot{G_2}(c,d), \, \dot{G_3}(1,n-5), \, \dot{G_4}(1,n-5), \, \dot{G_5}(1,n-5)\big\},$$
where $a+b=n-3$, $a$, $b\geqslant 0$, and $c+d=n-4$, $c$, $d\geqslant 1$.
\begin{lemma}\label{largest}
For any graph $\dot{G}$ in $\mathcal{G}$, we have $\lambda_1(\dot{G})\leqslant n-2$, with equality holding if and only if $\dot{G}\sim \dot{G_1}(0,n-3)$.
\end{lemma}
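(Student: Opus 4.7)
The plan is to perform a case analysis over the six families in $\mathcal{G}$, using the equitable quotient matrix technique of Lemma \ref{equitable} as the main tool. For each $\dot{G} \in \mathcal{G}$, I would first fix a switching-equivalent representative in which the negative edges form the pattern depicted in Figures 1 and 2, then partition $V(\dot{G})$ according to the natural symmetries suggested by those pictures: the small number of distinguished vertices incident to negative edges together with one or two cliques of ``generic'' vertices that play interchangeable roles. Each such partition is equitable, so Lemma \ref{equitable}(1), combined with the nonnegative Perron setup of Remark \ref{spanning2}, reduces the computation of $\lambda_1(\dot{G})$ to finding the largest eigenvalue $\lambda_1(Q)$ of a quotient matrix $Q$ of size at most $5$.

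For the four parameter-free families $\dot{G_1'}(1,n-4)$, $\dot{G_3}(1,n-5)$, $\dot{G_4}(1,n-5)$, $\dot{G_5}(1,n-5)$, the entries of $Q$ depend only on $n$, so it suffices to evaluate the characteristic polynomial of $Q$ at $x = n-2$ and verify that its sign forces the largest root to lie strictly below $n-2$ for all $n \geqslant 7$. I expect this reduces in each case to a polynomial inequality in $n$ that is straightforward to verify. For the parameter-dependent families $\dot{G_1}(a,b)$ (with $a+b = n-3$, $a,b \geqslant 0$) and $\dot{G_2}(c,d)$ (with $c+d = n-4$, $c,d \geqslant 1$), the entries of $Q$ depend on the parameter as well, so one must study the characteristic polynomial of $Q$, evaluated at $n-2$, as a function of that parameter subject to the sum constraint. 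For $\dot{G_1}(a,b)$ the expectation is that this polynomial expression factors in such a way that it vanishes precisely when the signed graph is switching-equivalent to $\dot{G_1}(0, n-3)$, and is positive otherwise; for $\dot{G_2}(c,d)$ a strict inequality should hold throughout the admissible range.

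The main obstacle is the algebraic analysis for $\dot{G_1}(a,b)$: one must simultaneously establish the upper bound $\lambda_1(Q) \leqslant n-2$ for every admissible pair $(a,b)$ and pinpoint exactly the parameter values where equality is attained. I would attack this by writing the characteristic polynomial of $Q$ explicitly in terms of $a$ and $n$, then either factoring the value at $x = n-2$ or giving a monotonicity argument (potentially via Lemma \ref{interlacing} to compare the quotient matrices at consecutive values of $a$) that shows the extremum is achieved only at the boundary of the parameter range. Once this case is settled and a strict inequality is verified for the remaining five families, the equality characterization follows by collating the outcomes: the unique member of $\mathcal{G}$, up to switching, attaining $\lambda_1 = n-2$ is $\dot{G_1}(0, n-3)$.
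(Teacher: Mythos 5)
Your overall strategy matches the paper's: equitable quotient matrices for each family, a monotonicity argument in the parameter for $\dot{G_1}(a,b)$ and $\dot{G_2}(c,d)$, and evaluation of the characteristic polynomial at $x=n-2$. But there is a genuine gap in the step you describe as sufficient. Knowing that the monic characteristic polynomial $P_Q$ of the quotient matrix satisfies $P_Q(n-2)>0$ does \emph{not} force the largest root to lie below $n-2$; it only shows that an even number of real roots exceed $n-2$, and two of them could. For the signed quotient matrices, which have negative entries, Perron--Frobenius gives no help in locating the largest root. The paper closes this hole with a separate argument that $\lambda_2(\dot{G})<n-2$: delete the vertex $v$, apply Cauchy interlacing (Lemma \ref{interlacing}), and bound the index of the resulting graph by $\left(1-\frac{1}{\omega_b}\right)(n-1)$ using the balanced-clique-number bound (Lemma \ref{balanced}), or the Wilf bound (Lemma \ref{lower}) in the unsigned case. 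Only the combination of $P_Q(n-2)>0$ with $\lambda_2<n-2$ yields $\lambda_1<n-2$. Your proposal never mentions controlling the second eigenvalue, so as written the key inequality is not established for any of the families.

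Two smaller points. First, your appeal to the ``nonnegative Perron setup of Remark \ref{spanning2}'' to identify $\lambda_1(\dot{G})$ with $\lambda_1(Q)$ does not work: the signed adjacency matrix is not nonnegative, so the final clause of Lemma \ref{equitable} is unavailable. The paper instead uses Lemma \ref{equitable}(2), adding scalar multiples of $J$ to the blocks to show that the eigenvalues of $A(\dot{G})$ outside $\mathrm{Spec}(Q)$ all lie in $\{-1,0\}$, whence $\lambda_1(\dot{G})=\lambda_1(Q)$. Second, the paper shortcuts the families $\dot{G_2}(c,d)$, $\dot{G_3}(1,n-5)$, $\dot{G_4}(1,n-5)$, $\dot{G_5}(1,n-5)$ by passing to the common underlying graph via Lemma \ref{unbalanced} (for connected unbalanced $\dot{G}$ one has $\lambda_1(\dot{G})<\lambda_1(G)$), so a single nonnegative quotient matrix suffices for all four; your plan of a separate signed quotient matrix for each of $\dot{G_3},\dot{G_4},\dot{G_5}$ is workable in principle but makes the $\lambda_2$ issue above reappear in each case.
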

\begin{proof}[\rm{\textbf{Proof.}}]
We will complete the proof by showing the following five claims. Firstly, we claim that $\lambda_1(\dot{G_1}(0,n-3))=n-2$.

\begin{claim}
$\lambda_1(\dot{G_1}(0,n-3))=n-2$.
\end{claim}
\begin{proof}[\rm{\textbf{Proof of Claim 1.}}]
For the signed graph $\dot{G_1}(0,n-3)$, we give a vertex partition with $V_1=\{u\}$, $V_2=\{v\}$, $V_3=\{w\}$ and $V_4=V(\dot{G})\setminus\{u,v,w\}$. Then the adjacency matrix $A(\dot{G_1}(0,n-3))$ and its corresponding equitable quotient matrix $Q_1$ are as following
$$A(\dot{G_1}(0,n-3))={\footnotesize\begin{bmatrix}
        0 & -1 & 1 & \bm{0^T} \\
        -1 & 0 & 1 & \bm{j^T_{n-3}} \\
        1 & 1 & 0 & \bm{j^T_{n-3}} \\
        \bm{0} & \bm{j_{n-3}} & \bm{j_{n-3}} & (J-I)_{n-3}\\
   \end{bmatrix}}\ \text{and}\
Q_1={\footnotesize\begin{bmatrix}
        0 & -1 & 1 & 0 \\
        -1 & 0 & 1 & n-3  \\
        1 & 1 & 0 & n-3  \\
        0 & 1 & 1 & n-4 \\
   \end{bmatrix}.}
$$
By Lemma \ref{equitable} (1), the eigenvalues of $Q_1$ are also the eigenvalues of $A(\dot{G_1}(0,n-3))$. The characteristic polynomial of $Q_1$ is
\begin{align*}
P_{Q_1}(x)=(x-n+2)(x-1)(x+1)(x+2).
\end{align*}
Hence, $\lambda_1(Q_1)=n-2$. Add some scalar multiple of the all-one block $J$ to block of $A(\dot{G_1}(0,n-3))$ and $A(\dot{G_1}(0,n-3))$ becomes
$$A_1={\footnotesize\begin{bmatrix}
 0 & 0 & 0 & \bm{0^T} \\
 0 & 0 & 0 & \bm{0^T} \\
 0 & 0 & 0 & \bm{0^T} \\
 \bm{0} & \bm{0} & \bm{0} & -I_{n-3} \\
   \end{bmatrix}.}\
$$
By Lemma \ref{equitable} (2), there are $n-4$ eigenvalues of $\dot{G_1}(0,n-3)$ contained in the spectra of $A_1$. Since Spec($A_1$)=$\{-1^{[n-3]},0^{[3]}\}$, we have $\lambda_1(\dot{G_1}(0,n-3))=\lambda_1(Q_1)=n-2$.
\end{proof}
Next, we claim that $\lambda_1(\dot{G})< n-2$ for any graph $\dot{G}\in \mathcal{G}\setminus\{ \dot{G_1}(0,n-3)\}$.
\begin{claim}
$\lambda_1(\dot{G_1}(a,b))<n-2,\ 1\leqslant a\leqslant b$.
\end{claim}
\begin{proof}[\rm{\textbf{Proof of Claim 2.}}]
We prove that Claim by showing
$$\lambda_1(\dot{G_1}(\lfloor\frac{n-3}{2}\rfloor,\lceil\frac{n-3}{2}\rceil))<\cdots<\lambda_1(\dot{G_1}(1,n-4))<n-2.$$
Partition the vertices set of $\dot{G_1}(a,b)$ as $V_1=\{u\}$, $V_2=\{v\}$, $V_3=\{w\}$, $V_4=N(u)\setminus\{v,w\}$ and $V_5=N(v)\setminus\{u,w\}$. Then the adjacency matrix $A(\dot{G_1}(a,b))$ and its corresponding equitable quotient matrix $Q_2(a,b)$ are as following
$$A(\dot{G_1}(a,b))={\footnotesize\begin{bmatrix}
        0 & -1 & 1 & \bm{j^T_{a}} & \bm{0^T} \\
        -1 & 0 & 1 & \bm{0^T} & \bm{j^T_{b}} \\
        1 & 1 & 0 & \bm{j^T_{a}} & \bm{j^T_{b}} \\
        \bm{j_{a}} & \bm{0} & \bm{j_{a}} & (J-I)_{a} & J_{b} \\
        \bm{0} & \bm{j_{b}} & \bm{j_{b}} & J_{a} & (J-I)_{b} \\
   \end{bmatrix}}\ \text{and}\
Q_2(a,b)={\footnotesize\begin{bmatrix}
        0 & -1 & 1 & a & 0 \\
        -1 & 0 & 1 & 0 & b \\
        1 & 1 & 0 & a & b \\
        1 & 0 & 1 & a-1 & b \\
        0 & 1 & 1 & a & b-1 \\
   \end{bmatrix}.}
$$
By Lemma \ref{equitable} (1), the eigenvalues of $Q_2(a,b)$ are also the eigenvalues of $A(\dot{G_1}(a,b))$. The characteristic polynomial of $Q_2(a,b)$ is
\begin{align}\label{Q2}
P_{Q_2(a,b)}(x,a,b)=&x^5-(a+b-2)x^4-(3a+3b+2)x^3+(2ab-a-b-4)x^2\notag\\
&+(5ab+3a+3b)x+2ab+2a+2b+2.
\end{align}
Add some scalar multiple of the all-one block $J$ to block of $A(\dot{G_1}(a,b))$ and $A(\dot{G_1}(a,b))$ becomes
$$A_2={\footnotesize\begin{bmatrix}
        0 & 0 & 0 & \bm{0^T} & \bm{0^T} \\
        0 & 0 & 0 & \bm{0^T} & \bm{0^T} \\
        0 & 0 & 0 & \bm{0^T} & \bm{0^T} \\
        \bm{0} & \bm{0} & \bm{0} & -I_{a} & 0 \\
        \bm{0} & \bm{0} & \bm{0} & 0 & -I_{b} \\
   \end{bmatrix}.}\
$$
By Lemma \ref{equitable} (2), there are $n-5$ eigenvalues of $\dot{G_1}(a,b)$ contained in the spectra of $A_2$. Since $\lambda_1(Q_2(a,b))>0$ and Spec($A_2$)=$\{-1^{[n-3]},0^{[3]}\}$, we have $\lambda_1(\dot{G_1}(a,b))=\lambda_1(Q_2(a,b))$.

Noting that
$$P_{Q_2(a,b)}(x,a,b)-P_{Q_2(a-1,b+1)}(x,a-1,b+1)=(b-a+1)(2x+1)(x+2),$$
then $P_{Q_2(a,b)}(x,a,b)>P_{Q_2(a-1,b+1)}(x,a-1,b+1)$ when $x>-\frac{1}{2}$. Hence, $\lambda_1(Q_2(a,b))<\lambda_1(Q_2(a-1,b+1))$. Thus, $\lambda_1(\dot{G_1}(a,b))<\lambda_1(\dot{G_1}(a-1,b+1))$, and then
$$\lambda_1(\dot{G_1}(\lfloor\frac{n-3}{2}\rfloor,\lceil\frac{n-3}{2}\rceil))<\cdots<\lambda_1(\dot{G_1}(1,n-4)).$$
Now we will prove that $\lambda_1(\dot{G_1}(1,n-4))<n-2$. By (\ref{Q2}), we have
\begin{align*}
P_{Q_2(1,n-4)}(x,1,n-4)&=(x+2)g_2(x),
\end{align*}
where $g_2(x)=x^4-(n-3)x^3-(n-1)x^2+(3n-11)x+2n-6$. Note that, for $n\geqslant 7$,
$$g_2(n-2)=2n^2-11n+12>0.$$
To complete the proof, it suffices to prove that $\lambda_2(\dot{G_1}(1,n-4))<n-2$. In fact, by Lemmas \ref{interlacing} and \ref{balanced}, we have
\begin{align*}
\lambda_2(\dot{G_1}(1,n-4))&\leqslant \lambda_1(\dot{G_1}(1,n-4)-v)\\
&\leqslant\left(1-\frac{1}{\omega_b(\dot{G_1}(1,n-4)-v)}\right)(n-1)\\
&=\frac{(n-1)(n-3)}{n-2}<n-2.
\end{align*}
Hence, $\lambda_1(\dot{G_1}(1,n-4))<n-2$.
\end{proof}

\begin{claim}
$\lambda_1(\dot{G_1'}(1,n-4))<n-2.$
\end{claim}
\begin{proof}[\rm{\textbf{Proof of Claim 3.}}]
Partition the vertices set of $\dot{G_1'}(1,n-4)$ as $V_1=\{u\}$, $V_2=\{v\}$, $V_3=\{w\}$, $V_4=\{u_1\}$ and $V_5=N(v)\setminus\{u,w\}$. The corresponding equitable quotient matrix $Q_3$ of $A(\dot{G_1'}(1,n-4))$ is as following
$$Q_3={\footnotesize\begin{bmatrix}
        0 & -1 & 1 & -1 & 0 \\
        -1 & 0 & 1 & 0 & n-4 \\
        1 & 1 & 0 & 1 & n-4 \\
        -1 & 0 & 1 & 0 & n-4 \\
        0 & 1 & 1 & 1 & n-5 \\
   \end{bmatrix}.}
$$
Furthermore, by Lemma \ref{equitable}, we know that the $\lambda_1(\dot{G_1'}(1,n-4))=\lambda_1(Q_3)$. The characteristic polynomial of $Q_3$ is $P_{Q_3}(x)=xg_3(x),$
where $g_3(x)=x^4 + (5 - n)x^3 + (7 - 3n)x^2 + (n - 5)x + (4n - 12)$.
Note that, for $n\geqslant 7$,
$$g_3(n-2)=2n^2-7n+2>0.$$
To complete the proof, it suffices to prove that $\lambda_2(\dot{G_1'}(1,n-4))<n-2$. In fact, by Lemmas \ref{interlacing} and \ref{balanced}, we have
\begin{align*}
\lambda_2(\dot{G_1'}(1,n-4))&\leqslant \lambda_1(\dot{G_1'}(1,n-4)-v)\\
&\leqslant\left(1-\frac{1}{\omega_b(\dot{G_1'}(1,n-4)-v)}\right)(n-1)\\
&=\frac{(n-1)(n-3)}{n-2}<n-2.
\end{align*}
Hence, $\lambda_1(\dot{G_1'}(1,n-4))<n-2$.
\end{proof}

\begin{claim}
$\lambda_1(\dot{G_2}(c,d))<n-2,\ 1\leqslant c\leqslant d$.
\end{claim}
\begin{proof}[\rm{\textbf{Proof of Claim 4.}}]
We may consider the underlying graph $G_2(c,d)$ for $\dot{G_2}(c,d)$ and we prove that Claim by showing
$$\lambda_1(G_2(\lfloor\frac{n-4}{2}\rfloor,\lceil\frac{n-4}{2}\rceil))<\cdots<\lambda_1(G_2(1,n-5))<n-2.$$
Partition the vertices set of $G_2(c,d)$ as $V_1=\{u\}$, $V_2=\{v\}$, $V_3=\{w,w_1\}$, $V_4=N(u)\setminus\{v,w,w_1\}$ and $V_5=N(v)\setminus\{u,w,w_1\}$. Then the corresponding equitable quotient matrix $Q_4(c,d)$ of $A(G_2(c,d))$ is
$$Q_4(c,d)={\footnotesize\begin{bmatrix}
        0 & 1 & 2 & c & 0 \\
        1 & 0 & 2 & 0 & d \\
        1 & 1 & 0 & c & d \\
        1 & 0 & 2 & c-1 & d \\
        0 & 1 & 2 & c & d-1 \\
   \end{bmatrix}.}$$
Since $A(G_2(c,d))$ is nonnegative and irreducible, we have $\lambda_1(A(G_2(c,d)))=\lambda_1(Q_4(c,d))$ by Lemma \ref{equitable}. The characteristic polynomial of $Q_4(c,d)$ is
\begin{align}\label{Q3}
P_{Q_4(c,d)}(x,c,d)=&x^5+(2-c-d)x^4-(4c+4d+4)x^3+(2cd-2c-2d-14)x^2\notag\\
&+(3cd+5c+5d-13)x+4c+4d-4cd-4.
\end{align}
Noting that
$$P_{Q_4(c,d)}(x,c,d)-P_{Q_4(c-1,d+1)}(x,c-1,d+1)=(d-c+1)(2x^2+3x-4),$$
then $P_{Q_4(c-1,d+1)}(x,c-1,d+1)<P_{Q_4(c,d)}(x,c,d)$ when $x> 1$. Hence, $\lambda_1(Q_4(c,d))<\lambda_1(Q_4(c-1,d+1))$. Thus, $\lambda_1(G_2(c,d))<\lambda_1(G_2(c-1,d+1))$ and,
$$\lambda_1(G_2(\lfloor\frac{n-4}{2}\rfloor,\lceil\frac{n-4}{2}\rceil))<\cdots<\lambda_1(G_2(1,n-5)).$$
Noting that, by ($\ref{Q3}$)
\begin{align*}
P_{Q_4(1,n-5)}(x,1,n-5)=x^5+(6-n)x^4+(12-4n)x^3-16x^2+(8n-48)x,
\end{align*}
then, for $n\geqslant 7$, we have
$$P_{Q_4(1,n-5)}(n-2,1,n-5)=4n(n-2)(n-6)>0.$$
To complete the proof, it suffices to prove that $\lambda_2(G_2(1,n-5))<n-2$. In fact, by Lemmas \ref{interlacing} and \ref{lower}, we have
\begin{align*}
\lambda_2(G_2(1,n-5))&\leqslant \lambda_1(G_2(1,n-5)-v)\\
&\leqslant\left(1-\frac{1}{\omega(G_2(1,n-5)-v)}\right)(n-1)\\
&=\frac{(n-1)(n-4)}{n-3}<n-2.
\end{align*}
Therefore, we have
$\lambda_1(G_2(1,n-5))<n-2.$ By Lemma \ref{unbalanced}, we have $\lambda_1(\dot{G_2}(c,d))<\lambda_1(G_2(c,d))<n-2$.
\end{proof}
\begin{claim}
$\lambda_1(\dot{G_i}(1,n-5))<n-2, \ i=3,4,5.$
\end{claim}
\begin{proof}[\rm{\textbf{Proof of Claim 5.}}]
Write $G_i(1,n-5)$ as the underlying graph of $\dot{G_i}(1,n-5)$ for $i=3,4,5$. Noting that $G_2(1,n-5)\cong G_i(1,n-5)$ for $i=3,4,5$. Hence, by Lemma \ref{unbalanced} we have $\lambda_1(\dot{G_i}(1,n-5))<\lambda_1(G_i(1,n-5))=\lambda_1(G_2(1,n-5))<n-2$.
\end{proof}
\end{proof}

\section{A proof of Tur$\mathbf{\acute{a}}$n number of $\mathcal{K}_4^-$}
Let $\dot{G}$ be an unbalanced signed graph of order $n$ ($n\geqslant7$). For any vertex $v$ in $V(\dot{G})$, $N_{\dot{G}}(v)$ (or $N(v)$) is the set of the neighbors of $v$ and $N_{\dot{G}}[v]=N_{\dot{G}}(v)\cup\{v\}$ (or $N[v]$). For $U\subseteq V(G)$, let $\dot{G}[U]$ be the subgraph induced by $U$.
Let $\dot{G}$ be a $\mathcal{K}_4^-$-free unbalanced signed graph with maximum edges. In fact, $\dot{G}$ is connected. Otherwise, for some two vertices $u$ and $v$ in distinct components, we add the edge $uv$ to $\dot{G}$. Then $\dot{G}+uv$ is a $\mathcal{K}_4^-$-free unbalanced signed graph with more edges than $\dot{G}$, which is a contradiction.
\begin{proof}[\rm{\textbf{Proof of Theorem \ref{edge}.}}]
Let $\dot{G}$ be a $\mathcal{K}_4^-$-free unbalanced signed graph with maximum edges. Then $\dot{G}$ contains at least one negative cycle, and assume the smallest length of the negative cycles is $\ell$. Since each signed graph in $\mathcal{G}$ is $\mathcal{K}_4^-$-free and unbalanced, from (\ref{n-3}) we have
\begin{equation}\label{medge}
e(\dot{G})\geqslant \frac{n(n-1)}{2}-(n-3).
\end{equation}
If $\ell\geqslant 4$, then $\dot{G}$ is $\mathcal{K}_3^-$-free. Noting that $\dot{G}$ is connected, by Theorem \ref{c3edge}, we have
$$e(\dot{G})\leqslant \frac{n(n-1)}{2}-(n-2)<\frac{n(n-1)}{2}-(n-3),$$
which is a contradiction to (\ref{medge}).
Hence, assume that $uvwu$ is an unbalanced $K_3$ with $\sigma(uv)=-1$ and $\sigma(uw)=\sigma(vw)=+1$. Suppose that $e(\dot{G})=\frac{n(n-1)}{2}-q$. Since $e(\dot{G})\geqslant\frac{n(n-1)}{2}-(n-3)$, we have $q\leqslant n-3$. Now the proof will be divided into two cases.

\textbf{Case 1.} $|N(u)\cap N(v)|=1$.

Let $N(u)\setminus\{v,w\}=\{u_1,\cdots,u_{a}\}$ and $N(v)\setminus\{u,w\}=\{v_1,\cdots,v_{b}\}$. Then, $a+b\leqslant n-3$. In this case, we have
\begin{align*}
e(\dot{G})&=e(\dot{G}[V(\dot{G})\setminus\{u,v\}])+e(\dot{G}[\{u,v\},V(\dot{G})\setminus\{u,v\}])+1\\
&\leqslant \frac{(n-2)(n-3)}{2}+(a+1)+(b+1)+1\\
&\leqslant \frac{n(n-1)}{2}-(n-3).
\end{align*}
Hence, by (\ref{medge}) we have $e(\dot{G})=\frac{n(n-1)}{2}-(n-3).$
Furthermore, $\dot{G}[V(\dot{G})\setminus\{u,v\}]$ is a clique and $a+b=n-3$, namely each vertex in $V(\dot{G})\setminus\{u,v,w\}$ is adjacent to $u$ or $v$. Since the switching operation remains the sign of cycles, any $\dot{G}_U$ is still $\mathcal{K}^-_4$-free and unbalanced. We may suppose $\sigma(uu_i)=+1$ for any $1\leqslant i\leqslant a$. Otherwise, we do switching operation at some $u_i$. Similarly, suppose $\sigma(vv_j)=+1$ for any $1\leqslant j\leqslant b$.

Without loss of generality, assume that $a\leqslant b$. The assumption $n\geqslant 7$ ensures that $b\geqslant 2$. For any two vertices, say $v_i$ and $v_j$, in $N(v)\setminus\{u,w\}$, we have $\dot{G}[\{v,w,v_i,v_j\}]\sim (K_4,+)$. Hence, $\sigma(wv_i)=\sigma(v_iv_j)=+1$ for $1\leqslant i,j\leqslant b$, and then $\dot{G}[N(v)\setminus\{u\}]$ is a clique with all positive edges.

If $a=0$, then $\dot{G}\sim \dot{G_1}(0,n-3)$. If $a=1$, then $\dot{G}[\{w,u_1,v_{i},v_{j}\}]\sim (K_4,+)$ and $\sigma(u_1w)=\sigma(u_1v_{i})$ for $1\leqslant i\leqslant b$. If $\sigma(u_1w)=\sigma(u_1v_{i})=+1$, then $\dot{G}\sim \dot{G_1}(1,n-4)$. If $\sigma(u_1w)=\sigma(u_1v_{i})=-1$, then $\dot{G}\sim \dot{G_1'}(1,n-4)$.

If $a\geqslant 2$, for any two vertices, say $u_{i}$ and $u_{j}$, in $N(u)\setminus\{v,w\}$, we have $\dot{G}[\{u,w,u_{i},u_{j}\}]\sim (K_4,+)$. Hence, $\dot{G}[N(u)\setminus\{v\}]$ is a clique with all positive edges. Noting that $\dot{G}[\{w,u_i,v_j,v_t\}]$ $\sim (K_4,+)$, we have $\sigma(u_i v_j)=+1$ for $1\leqslant i\leqslant a$ and $1\leqslant j\leqslant b$. Therefore, $\dot{G}\sim \dot{G}_1(a, b)$ with $a\geqslant 2$.

Hence, in this case, $\dot{G}=\dot{G_1}(a,b)$ with $a$, $b\geqslant 0$, $a+b=n-3$, or $\dot{G}=\dot{G_1'}(1,n-4)$.

\textbf{Case 2.} $|N(u)\cap N(v)|\geqslant 2$.

Let $N(u)\cap N(v)=\{w,w_1,\cdots,w_k\}$, $N[u]\setminus N[v]=\{u_1,\cdots,u_{c}\}$, and $N[v]\setminus N[u]=\{v_1,\cdots,v_{d}\}$. For any $1\leqslant i\leqslant k$, we claim that $w$ is not adjacent to $w_i$. Otherwise, suppose that $w$ is adjacent to $w_1$. Since $\dot{G}[\{u,v,w\}]$ is an unbalanced $K_3$, we have $\dot{G}[\{u,v,w,w_1\}]$ is an unbalanced $K_4$, which is a contradiction to the assumption that $\dot{G}$ is $\mathcal{K}_4^-$-free and unbalanced.

We further claim that each vertex of $\dot{G}$ is adjacent to the vertex $u$ or $v$. Otherwise suppose there are $r$ vertices in $V(\dot{G})\setminus(N(u)\cup N(v))$. Then $3+c+d+k+r=n$ holds. From the inequality
$$n-3\geqslant q\geqslant c+d+k+2r=n-3+r,$$
we have $r=0$. Furthermore $q=c+d+k=n-3$. Then $\dot{G}[V(\dot{G})\setminus\{u,v,w\}]$ is a clique and $w$ is adjacent to $u_i$ and $v_j$ for $1\leqslant i\leqslant c$ and $1\leqslant j\leqslant d$.

We may suppose $\sigma(uu_i)=\sigma(vv_j)=\sigma(vw_t)=+1$ for $1\leqslant i\leqslant c$, $1\leqslant j\leqslant d$, and $1\leqslant t\leqslant k$. Without loss of generality, assume that $0\leqslant c\leqslant d$. If $c=0$, then we may set $\dot{G}^*=\dot{G}_{\{u\}}$, namely, $\dot{G}^*$ is obtained from $\dot{G}$ by a switching operation at the vertex $u$. Then in $\dot{G}^*$, we have $|N(u)\cap N(w)|=1$ and $\sigma(u w)=-1$. Thus $\dot{G}^*$ satisfies the condition of Case 1. Then we have
$$e(\dot{G})=e(\dot{G}^*)=\frac{n(n-1)}{2}-(n-3),$$
and $\dot{G}\sim\dot{G}^*\sim \dot{G_1}(a,b)$ or $ \dot{G_1'}(1,n-4)$. Now suppose $c\geqslant 1$, and we will distinguish two subcases.

\textbf{Subcase 2.1.} $|N(u)\cap N(v)|=2$.

Since $k=1$ and $n\geqslant 7$, we have $d\geqslant 2$. For any two vertices, say $v_i$ and $v_j$, in $N[v]\setminus N[u]$, we have $\dot{G}[\{v,w,v_i,v_j\}]\sim (K_4,+)$. Hence, $\sigma(wv_i)=+1$ for $1\leqslant i\leqslant d$ and $\dot{G}[N[v]\setminus N[u]]$ is a clique with all positive edges. Similarly, we have $\sigma(w_1v_i)=+1$ for $1\leqslant i\leqslant d$.

If $c=1$, then $d=n-5$. Since the clique $\dot{G}[\{w,u_1,v_i,v_j\}]\sim (K_4,+)$, we have $\sigma(u_1w)=\sigma(u_1v_i)$, and similarly, we get $\sigma(u_1w_1)=\sigma(u_1v_i)$. Thus $\sigma(u_1w)=\sigma(u_1w_1)=\sigma(u_1v_i)$ for $1\leqslant i\leqslant n-5$. Suppose $\sigma(uw_1)=+1$. If $\sigma(u_1w)=\sigma(u_1w_1)=\sigma(u_1v_i)=+1$, then $\dot{G}\sim\dot{G_2}(1,n-5)$. If $\sigma(u_1w)=\sigma(u_1w_1)=\sigma(u_1v_i)=-1$, then we do switching operation at $\{u_1\}$ and $\dot{G}\sim\dot{G_3}(1,n-5)$. Suppose $\sigma(uw_1)=-1$. If $\sigma(u_1w)=\sigma(u_1w_1)=\sigma(u_1v_i)=+1$, then $\dot{G}\sim\dot{G_4}(1,n-5)$. If $\sigma(u_1w)=\sigma(u_1w_1)=\sigma(u_1v_i)=-1$, then we do switching operation at $\{u_1\}$ and $\dot{G}\sim\dot{G_5}(1,n-5)$.

Now suppose $c\geqslant 2$. For any two vertices, say $u_i$ and $u_j$, in $N[u]\setminus N[v]$, then $\dot{G}[\{u,w,u_i,u_j\}]$ $\sim (K_4,+)$. Hence, $\sigma(wu_i)=+1$ for $1\leqslant i\leqslant c$, and then $\dot{G}[N[u]\setminus N[v]]$ is a clique with all positive edges. Noting that $\dot{G}[\{w,u_i,v_j,v_t\}]\sim(K_4,+)$, then $\sigma(u_iv_j)=+1$ for $1\leqslant i\leqslant c$ and $1\leqslant j\leqslant d$. By the fact that the cliques $\dot{G}[\{w_1,u_i,v_j,v_t\}]\sim (K_4,+)$ and $\dot{G}[\{u,w_1,u_i,u_j\}]\sim (K_4,+)$, we have $\sigma(w_1u_i)=+1$ for $1\leqslant i\leqslant c$ and $\sigma(uw_1)=+1$, respectively. Therefore, $\dot{G}\sim \dot{G_2}(c,d)$ in this subcase.

\textbf{Subcase 2.2.} $|N(u)\cap N(v)|\geqslant3$.

If $c=1$, then we may set $\dot{G}^*=\dot{G}_{\{u\}}$, namely, $\dot{G}^*$ is obtained from $\dot{G}$ by a switching operation at the vertex $u$. Then in $\dot{G}^*$, we have $|N(u)\cap N(w)|=2$, $\sigma(u w)=-1$, $\{w_1,w_2,\cdots,w_k\}=N[u]\setminus N[w]$, and $\{v_1,\cdots,v_d\}=N[w]\setminus N[u]$. Thus $\dot{G}^*$ satisfies the conditions of Subcase 2.1. Hence, $\dot{G}\sim\dot{G^*}\sim \dot{G_2}(k,d)$.

Now suppose $c\geqslant 2$. The fact $q=n-3$ ensures that $\dot{G}[\{w_1,\cdots,w_k\}]$ is a clique. Noting that $\sigma(vw_t)=+1$, we have $\sigma(uw_t)=-1$ for $1\leqslant t\leqslant k$. Otherwise, if $\sigma(uw_1)=+1$, then $\dot{G}[\{u,v,w_1,w_2\}]$ is an unbalanced $K_4$, which is a contradiction. Furthermore, $\dot{G}[\{w_1,\cdots,w_k\}]$ is a clique with all positive edges.
Since $\dot{G}[\{v,w,v_i,v_j\}]\sim (K_4,+)$, we have $\sigma(w v_i)=+1$ and $\sigma(v_jv_t)=+1$ for $1\leqslant i,j,t\leqslant d$. Similarly, we have $\sigma(wu_i)=+1$ and $\sigma(u_ju_t)=+1$ for $1\leqslant i,j,t\leqslant c$. Since $\dot{G}[\{w,u_i,v_j,v_t\}]\sim(K_4,+)$, we have $\sigma(u_i v_j)=+1$ for $1\leqslant i\leqslant c$ and $1\leqslant j\leqslant d$. Considering the cliques $\dot{G}[\{u,u_i,w_j,w_t\}]$ and $\dot{G}[\{v,v_i,w_j,w_t\}]$, we have $\sigma(v_i w_j)=+1$ for $1\leqslant i\leqslant d$ and $1\leqslant j\leqslant k$, and $\sigma(u_i w_j)=-1$ for $1\leqslant i\leqslant c$ and $1\leqslant j\leqslant k$. While $\dot{G}[\{w_1,w_2,u_1,v_1\}]$ is an unbalanced $K_4$, which is a contradiction.
\end{proof}
\section{A proof of spectral Tur$\mathbf{\acute{a}}$n number of $\mathcal{K}_4^-$}
Let $\dot{G}$ be a signed graph of order $n$. By the table of the spectra of signed graphs with at most six vertices \cite{BCST}, we can check that Theorem \ref{spectrum} is true for $n\leqslant 6$. Therefore, assume that $n\geqslant 7$. The following celebrated upper bound of $\rho(G)$ is very crucial for our proof.

\begin{theorem}\cite{HSF, N}\label{delta}
Let $G$ be a graph of order $n$ with the minimum degree $\delta=\delta(G)$ and $e=e(G)$. Then
$$\rho(G)\leqslant\frac{\delta-1+\sqrt{8e-4\delta n+(\delta+1)^2}}{2}.$$
\end{theorem}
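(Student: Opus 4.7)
The plan is to combine Theorem \ref{edge} (the Tur\'an edge bound for $\mathcal K_4^-$-free unbalanced signed graphs) with Theorem \ref{delta} (the Hong--Shu--Fang type spectral bound for an ordinary graph) and the general comparison $\rho(\dot G)\le \lambda_1(G)$ of a signed graph with its underlying graph $G$, and then to pick out the unique equality case via Lemma \ref{largest}. Small cases $n\le 6$ can be read off the spectrum table in \cite{BCST}, so I assume $n\ge 7$ and, by a standard component argument, that $\dot G$ is connected. The comparison $\rho(\dot G)\le \lambda_1(G)$ has two parts: $\lambda_1(\dot G)\le \lambda_1(G)$ is exactly Lemma \ref{unbalanced}, while $-\lambda_n(\dot G)\le \lambda_1(G)$ follows by evaluating the Rayleigh quotient of $A(\dot G)$ on the entry-wise absolute value of a bottom eigenvector together with the pointwise inequality $|A(\dot G)|\le A(G)$.

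Suppose now that $\rho(\dot G)\ge n-2$. Then $\lambda_1(G)\ge n-2$, and inserting this into Theorem \ref{delta}, isolating the square root and squaring yields, after a short rearrangement,
\[
 e(\dot G)\;=\;e(G)\;\ge\;\binom{n-1}{2}+\frac{\delta(G)}{2}.
\]
Combining with the bound $e(\dot G)\le\binom{n}{2}-(n-3)=\binom{n-1}{2}+2$ from Theorem \ref{edge} gives $\delta(G)\le 4$ and squeezes $e(\dot G)$ into the narrow band $[\binom{n-1}{2}+\delta(G)/2,\,\binom{n-1}{2}+2]$. The clean sub-case is the Tur\'an-extremal one: if $e(\dot G)=\binom{n}{2}-(n-3)$ then Theorem \ref{edge} places $\dot G\in\mathcal G$, Lemma \ref{largest} gives $\lambda_1(\dot G)\le n-2$ with equality iff $\dot G\sim \dot G_1(0,n-3)$, and the matching bound $-\lambda_n(\dot G)\le n-2$ on each of the six members of $\mathcal G$ is extracted from the explicit spectra already exhibited in the five Claims of Section 2: by Lemma \ref{equitable}(2) the non-quotient eigenvalues are all $0$ or $-1$, and by inspection the smallest root of each of the quotient matrices $Q_i$ is bounded below by $-(n-2)$.

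The main obstacle is the complementary range $\binom{n-1}{2}+\delta(G)/2\le e(\dot G)<\binom{n}{2}-(n-3)$, where Theorem \ref{delta} is not sharp enough by itself to force $\lambda_1(G)<n-2$. Here I would sharpen the comparison $\rho(\dot G)\le \lambda_1(G)$ by exploiting unbalancedness: since $\dot G$ is strictly unbalanced, Lemma \ref{unbalanced} gives the strict drop $\lambda_1(\dot G)<\lambda_1(G)$, and furthermore $G$ is missing strictly more than $n-3$ edges in this sub-case, so a second pass through Theorem \ref{delta} with the sharper edge count should push $\lambda_1(G)$ strictly below $n-2$. For the $-\lambda_n(\dot G)$ direction the naive sign-flip trick is not available, because flipping every sign may destroy the $\mathcal K_4^-$-free property, so instead I would use Cauchy interlacing (Lemma \ref{interlacing}) to delete a vertex of minimum degree and control the resulting induced subgraph via Lemma \ref{balanced}. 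Carrying out both directions cleanly in this residual range is the delicate part of the proof.
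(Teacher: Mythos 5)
There is a fundamental mismatch here: what you have written is not a proof of the stated theorem. The statement to be proved is the Hong--Shu--Fang/Nikiforov bound
$\rho(G)\leqslant\frac{\delta-1+\sqrt{8e-4\delta n+(\delta+1)^2}}{2}$
for an arbitrary (ordinary, unsigned) graph $G$ in terms of its order, size and minimum degree; it makes no reference to signed graphs, to $\mathcal{K}_4^-$, to the family $\mathcal{G}$, or to Theorems \ref{edge} and \ref{spectrum}. Your proposal instead sketches a derivation of Theorem \ref{spectrum} (the spectral Tur\'an result for $\mathcal{K}_4^-$-free unbalanced signed graphs), and it does so by \emph{invoking} Theorem \ref{delta} as a known tool in its very first step. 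As an argument for Theorem \ref{delta} this is circular; as written, nothing in the proposal addresses the inequality that was actually to be established. (In the paper this theorem is quoted from \cite{HSF, N} without proof, precisely because it is a classical result about unsigned graphs.)

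A genuine proof would have to work at the level of the Perron eigenvector of $A(G)$. After squaring, the claimed bound is equivalent to $\rho^2-(\delta-1)\rho\leqslant 2e-\delta(n-1)$, i.e.\ $\rho$ is at most the larger root of $t^2-(\delta-1)t-(2e-\delta n+\delta)$. The standard route is to take a nonnegative eigenvector $\bm{x}$ with maximum entry $x_u=1$, expand $\rho^2=(A^2\bm{x})_u=\sum_{v\sim u}\sum_{w\sim v}x_w$, and bound the inner sums using $x_w\leqslant 1$ together with a count that charges each vertex at least $\delta$ edges, which is where the term $-\delta(n-1)$ comes from. None of the machinery you cite (Theorem \ref{edge}, Lemma \ref{largest}, Lemma \ref{unbalanced}, the structure of $\mathcal{G}$) is relevant to this computation. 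Separately, even read as a proof sketch of Theorem \ref{spectrum}, your outline diverges from the paper's logic: the paper first shows $\rho(\dot G)=\lambda_1(\dot G)$ via the negation $-\dot G$ and the bound $\lambda_1\leqslant(1-1/\omega_b)n$ (the negation trick is available because a balanced $K_4$ in $-\dot G$ would force an unbalanced $K_4$ in $\dot G$), establishes $\delta\geqslant 2$, and then uses Theorem \ref{delta} once to rule out $e(\dot G)\leqslant\binom{n}{2}-(n-2)$ outright, so the ``residual range'' you describe as delicate does not arise.
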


The negation of $\dot{G}$ (denoted by $-\dot{G}$) is obtained by reversing the sign of every edge in $\dot{G}$. Obviously, the eigenvalues of $-\dot{G}$ are obtained by reversing the sign of the eigenvalues of $\dot{G}$.

\begin{proof}[\rm{\textbf{Proof of Theorem \ref{spectrum}.}}]
Let $\dot{G}=(G,\sigma)$ be a $\mathcal{K}_4^-$-free unbalanced signed graph with maximum spectral radius. Since $\dot{G_1}(0,n-3)$ is a $\mathcal{K}_4^-$-free unbalanced signed graph, by Lemma \ref{largest},
\begin{equation}\label{n-2}
\rho(\dot{G})\geqslant \rho(\dot{G_1}(0,n-3))=n-2.
\end{equation}

We claim that $\rho(\dot{G})=\lambda_1(\dot{G})$. Otherwise $\rho(\dot{G})=\max\{\lambda_1(\dot{G}),-\lambda_n(\dot{G})\}=-\lambda_n(\dot{G})$.  Assume that $\dot{G_1}=-\dot{G}$. Hence, $\lambda_1(\dot{G_1})=-\lambda_n(\dot{G})$. Since $\dot{G}$ is $\mathcal{K}_4^-$-free, we have $\omega_b(\dot{G_1})\leqslant 3$. By Lemma \ref{balanced}, for $n\geqslant 7$, we have
$$\rho(\dot{G})=-\lambda_n(\dot{G})=\lambda_1(\dot{G_1})\leqslant\left(1-\frac{1}{\omega_b(\dot{G_1})}\right)n\leqslant\frac{2}{3}n<n-2,$$
which is a contradiction to (\ref{n-2}).

We claim that $\dot{G}$ is connected. Let $\bm{x}=(x_1,x_2,\cdots,x_n)^T$ be a unit eigenvector of $A(\dot{G})$ corresponding to $\lambda_1(\dot{G})$. Let vertices $u$ and $v$ be any two vertices belonging to distinct components, then we construct a signed graph $\dot{G_2}=(G+uv,\sigma_2)$ with $\sigma_2(e)=\sigma(e)$ when $e\in E(\dot{G})$. If $x_ux_v\geqslant 0$ (resp. $x_ux_v< 0$), then we take $\sigma_2(uv)=+1$ (resp. $\sigma_2(uv)=-1$). Hence, by Rayleigh-Ritz Theorem we have
\begin{equation}\label{xu}
\lambda_1(\dot{G}_2)-\lambda_1(\dot{G})\geqslant\bm{x}^TA(\dot{G}_2)\bm{x}-\bm{x}^TA(\dot{G})\bm{x}= 2\sigma_2(uv)x_ux_v\geqslant0.
\end{equation}
Since $\dot{G}_2$ is also $\mathcal{K}_4^-$-free and unbalanced, we have $\lambda_1(\dot{G}_2)\leqslant\lambda_1(\dot{G})$. So, $\lambda_1(\dot{G}_2)=\lambda_1(\dot{G})$. Furthermore, $\lambda_1(\dot{G}_2)=\bm{x}^TA(\dot{G}_2)\bm{x}$ holds, and then $A(\dot{G}_2)\bm{x}=\lambda_1(\dot{G}_2)\bm{x}$. From (\ref{xu}), $x_ux_v=0$ holds. Without loss of generality, suppose $x_u=0$. By $A(\dot{G})\bm{x}=\lambda_1(\dot{G})\bm{x}$ and $A(\dot{G}_2)\bm{x}=\lambda_1(\dot{G}_2)\bm{x}$, we have
$$\lambda_1(\dot{G})x_u=\sum_{w\in N_{\dot{G}}(u)}\sigma(uw)x_w=0,$$
and then
$$\lambda_1(\dot{G}_2)x_u=\sum_{w\in N_{\dot{G_2}}(u)}\sigma(uw)x_w+\sigma_2(uv)x_v=\sigma_2(uv)x_v=0.$$
Hence, $x_v=0$. Then $\bm{x}$ is a zero vector, which is a contradiction.

We claim that $\delta(\dot{G})\geqslant 2$. Otherwise there exits a vertex $u$ with $d_{\dot{G}}(u)=1$ and $uv\notin E(\dot{G})$ for some vertex $v$. Then we construct a signed graph $\dot{G_3}=(G+uv,\sigma_3)$. Noting that $d_{\dot{G_3}}(u)=2$, $\dot{G_3}$ is still $\mathcal{K}_4^-$-free unbalanced. Furthermore, we may obtain a contradiction as above.

If $e(\dot{G})\leqslant\frac{n(n-1)}{2}-(n-2)$, for the underlying graph $G$, by Theorem \ref{delta} we have
\begin{align*}
\rho(G)&\leqslant\frac{\delta-1+\sqrt{8e(G)-4\delta n+(\delta+1)^2}}{2}\\
&\leqslant\frac{\delta-1+\sqrt{8(\frac{n(n-1)}{2}-(n-2))-4\delta n+(\delta+1)^2}}{2}\\
&=\frac{\delta-1+\sqrt{4n^2-4(\delta+3)n+\delta^2+2\delta+17}}{2}\\
&\leqslant\frac{\delta-1+\sqrt{4n^2-4(\delta+3)n+\delta^2+6\delta+9}}{2}\\
&=n-2.
\end{align*}
By Lemma \ref{unbalanced}, we have $\rho(\dot{G})=\lambda_1(\dot{G})<\rho(G)\leqslant n-2$, which is a contradiction to (\ref{n-2}). Hence, $e(\dot{G})\geqslant\frac{n(n-1)}{2}-(n-3)$. By Theorem \ref{edge}, we have $e(\dot{G})=\frac{n(n-1)}{2}-(n-3)$ and $\dot{G}$ is switching equivalent to a signed graphs in $\mathcal{G}$. Noting that $\rho(\dot{G})\geqslant n-2$, by Lemma \ref{largest}, we have $\dot{G}\sim \dot{G_1}(0,n-3)$, and $\rho(\dot{G})= n-2$.
\end{proof}

\textbf{Conflicts of Interest:} The authors declare no conflict of interest.

\end{document}